\documentclass[12pt,a4paper,oneside]{article}
\usepackage{geometry}
 \geometry{
 a4paper,
 total={170mm,257mm},
 left=20mm,
 top=10mm,
 }
\usepackage{graphicx}
\usepackage{subfigure}
\usepackage{float}
\usepackage{color}
\usepackage{verbatim}
\usepackage{amssymb,amsmath}

\newcommand{\Rmnum}[1]{\expandafter\@slowromancap\romannumeral #1@}
\makeatletter
\def\Suma{\sum^{J/2-1}_{l=-J/2}}
\def\Sumb{\sum^{J-1}_{j=0}}
\newtheorem{theorem}{Theorem}[section]

\newtheorem{lemma} {Lemma}[section]

\newtheorem{remark}{Remark}[section]

\newtheorem{definition} {{Definition}}[section]

\newenvironment{pf}{{\noindent \it \bf Proof. }}{{\hfill$\Box$}\\}
\newenvironment{proof}{{\noindent \it \bf Proof. }}{{\hfill$\Box$}\\}
\makeatother
\newcommand{\enabstractname}{Abstract}

\begin{document}

\title{On a Fractional Schr\"odinger equation in the presence of Harmonic potential}

\author{Zhiyan Ding\footnote{Department of Mathematics, University of Wisconsin, Madison (zding49@wisc.edu)},\ \ \  Hichem Hajaiej\footnote{California State University, Los angeles, 5151 University Drive (hhajaie@calstatela.edu)}
}
\maketitle
\begin{abstract}
In this paper, we establish the existence of ground state solutions for a fractional Schr\"odinger equation in the presence of a harmonic trapping potential. We also address the orbital stability of standing waves. Additionally, we provide interesting numerical results about the dynamics and compare them with other types of Schr\"odinger equations \cite{R3,R4}. Our results explain the effect of each term of the Schr\"odinger equation : The fractional power, the power of the nonlinearity and the harmonic potential.
\smallskip

\noindent \textbf{Keywords:} Schr\"odinger equation, fractional Laplacian, harmonic potential, standing waves
\smallskip

\noindent \textbf{MSC 2010: 35Q55, 35J60, 47J30}
\end{abstract}
\maketitle

\section{Fractional Nonlinear Schr\"odinger equation with harmonic potential}

In this paper, we examine the following Schr\"odinger equation:
\begin{equation}\label{E1.1}
\left\{\begin{aligned}
  &i \psi_{t}=(-\Delta)^{s}\psi+|x|^{2}\psi-|\psi|^{2\sigma}\psi\;\;\mbox{in}\;\mathbb{R}^{N}\times [0,\infty),\\
  &\psi(0,x)=\psi_{0}(x)\in H_{s}(\mathbb{R}^{N}),
\end{aligned}
\right.
\end{equation}
where $0<s<1$, $\sigma>0$, $N\geq1$ and $\psi: \mathbb{R}^{N}\times [0,\infty)\longrightarrow \mathbb{C}$ is the wave function with initial condition $\psi_0(x)$ belongs to the following Sobolev space:
\begin{equation*}\label{E2.3}
H_{s}(\mathbb{R}^{N}):=\left\{u\in L^{2}(\mathbb{R}^{N}):\int_{\mathbb{R}^{N}}\int_{\mathbb{R}^{N}}\frac{|u(x)-u(y)|^{2}}{|x-y|^{N+2s}}dydx<\infty\right\}.
\end{equation*}
with 
\begin{equation*}\label{E2.31}
\|u\|_{H_{s}(\mathbb{R}^{N})}=\sqrt{\int_{\mathbb{R}^{N}}\int_{\mathbb{R}^{N}}\frac{|u(x)-u(y)|^{2}}{|x-y|^{N+2s}}dydx+\int_{\mathbb{R}^N}|u(x)|^2dx}
\end{equation*}

The fractional Laplacian $(-\Delta)^{s}$ is defined via a pseudo-differential operator
\begin{equation}\label{E1.2}
(-\Delta)^{s}u(x)=\mathcal{F}^{-1}[|\xi|^{2s}\mathcal[u]],\ s>0.
\end{equation}

For the Cauchy problem \eqref{E1.1}, we have two important conserved quantities: The mass of the wave function:
\begin{equation}\label{E1.3}
M(t)=||\psi(\cdot,t)||^2:=\int_{\mathbb{R}^N}|\psi(x,t)|^2dx\equiv M(0)
\end{equation}
and the total energy:
\begin{equation}\label{E1.31}
E(t)=\int_{\mathbb{R}^N}\left[\text{Re}\left(\psi^*(x,t)(-\Delta)^{s}\psi\right)+|x|^2|\psi|^2-\frac{1}{\sigma+1}|\psi(x,t)|^{2(\sigma+1)}\right]dx\equiv E(0).
\end{equation}

In  recent years,  a great attention has been focused on the study of problems involving the fractional Laplacian, which naturally appears in obstacle problems, phase transition, conservation laws, financial market. Nonlinear fractional Schr\"odinger equations have been proposed by Laskin \cite{las, lask} in order to expand the Feynman path integral, from the Brownian like to the L\'evy like quantum mechanical paths. The stationary solutions of fractional nonlinear Schr\"odinger equations have also been intensively studied due to their huge importance in nonlinear optics and quantum mechanics \cite{las, lask, HOW, OB}. The most interesting solutions have the special form:
\begin{equation}\label{E1.5}
\psi(x,t)=e^{-i\lambda t}u(x),\;\;\lambda\in \mathbb{R},\;\;u(x)\in \mathbb{C}.
\end{equation}
They are called the standing waves. These solutions reduce \eqref{E1.1} to a semilinear elliptic equation. In fact, after plugging \eqref{E1.5} into \eqref{E1.1}, we need to solve the following equation
\begin{equation}\label{E1.6}
 (-\Delta)^{s}u(x)+|x|^{2}u(x)-|u(x)|^{2\sigma}u(x)=\lambda u(x)\;\;\mbox{in}\;\mathbb{R}^{N}\times [0,\infty).
\end{equation}

The case $s=1$ has been intensively studied by many authors (See \cite{Zhao}). There also exist a considerable amount of results concerning the standing waves of fractional Nonlinear Schr\"odinger equations without the harmonic potential, we refer the readers to \cite{haj2, chan, cha, fib, haj, haj1, ros, Zh} and the references therein.

In this paper, we mainly focus on the solutions to \eqref{E1.6}. To the best of our knowledge, our results are new and will open the way to solve other class of fractional Schr\"odinger equations. This paper has two main parts: In the first part, we address the existence of standing waves through a particular variational form, whose solutions are called ground state solutions. We prove the existence of ground state solutions (Theorem \ref{T1}), and show some qualitative properties like monotonicity and radiality (Lemma \ref{R3.2}). We also proved that the ground state solutions are orbitally stable (Def \ref{D4.1}, Theorem \ref{T2}) if we have the uniqueness of the solutions for the Cauchy problem \eqref{E1.1} (Theorem \ref{T4.1}). We have also addressed the critical case $\sigma=\frac{2s}{N}$, which is consistent with the case $s=1$ in \cite{R3}. The second part of this article deals with the numerical method to solve \eqref{E1.1} and to establish the existence of ground state solutions as well as to establish the optimality of our conditions. In this part, we were not only able to show the existence of ground state solutions for $0<\sigma<\frac{2s}{N}$ but we also gave a  constrained variational problem (\eqref{E6.15}-\eqref{E6.17}), which was crucial to find the standing waves for the subcritical $\frac{2s}{N}\leq\sigma< \frac{2s}{N-2s}$. The numerical results provided a good explanation of the effect of $s$ on the ground state solution. To reach this goal, we showed the ground state solution is continuous and decreasing with respect to $s$ in $L^2$ and $L^\infty$ norm (Figure \ref{F2}), which is a similar phenomenon to \cite{R4}. Besides, like Gross–Pitaevskii Equation \cite{R3}, we examined the convergence property of $\lambda_c$. It turned out this convergence property also holds true in our case. Second, we checked the stability of ground state solutions for different $s$. If we add a small perturbation to the initial condition, for different $s$, the absolute value of the solution will always have periodic behavior, which shows the orbital stability (Figure \ref{F4}). Furthermore, surprisingly, when $s$ becomes smaller, the stability is worse, which means the oscillation amplitude in the periodic phenomenon becomes larger (Figure \ref{F5},\ref{F6}). We then address the case where the harmonic potential is not radial, and we obtained non radial symmetrical ground state solution (Figure \ref{F10.1},\ref{F10.2}). Finally, we provided interesting numerical results for the time dynamics of FNLS. 

The main difficulty of constructing ground state solutions comes from the lack of compactness of the Sobolev embeddings for the unbounded domain $\mathbb{R}^N$. However, by defining an appropriate function space, in which the norm of the potential is involved, we "recuperate" the compactness (see Lemma \ref{L3.1}). This fact, combined with rearrangement inequalities are the key points to prove the existence of ground state solutions. In the numerical part, the presence of the harmonic potential term is challenging. In fact, one can't take Fourier transfrom directly on both sides of the equation like \cite{R4} because we have nonlinear term. Different from \cite{R3}, we also can't use finite difference directly since fractional Laplacian is not a local term. Consequtently, we opted idea from \cite{R1} and use time splitting method. By our splitting, we can obtain specific solutions in each small step and also preserve the mass \eqref{E1.3}. For the ground state solutions, the classical Newton's method \cite{R3} is too slow because we have to deal with fractional Laplacian. To overcome this, we borrow idea from \cite{R2} and use normalized gradient flow (NGF) to find the ground state solutions. Moreover, for the case $\frac{2s}{N}\leq\sigma\leq \frac{2s}{N-2s}$, we have noticed that the energy in the original variational problem can not be bounded from below, therefore, we present a new constrained variational problem (\eqref{E6.15}-\eqref{E6.17}) to establish the existence of ground state solutions. 

The paper is organized as follows. In section 2, we give our main results about the existence of ground state solutions and orbital stability of standing waves. In section 3, we provide the proof of the existence. Then, in section 4, we discuss the orbital stability. In section 5, we use Split-Step Fourier Spectral method to solve \eqref{E1.1} numerically. In section 6, instead of using common iterative Newton's method, we use the NGF method to find ground states when $0<\sigma\leq \frac{2s}{N-2s}$. Finally, in section 7, we present our numerical results for the dynamics \eqref{E1.1} and compare them with other kinds of Schr\"odinger equations (\cite{R3,R4}).

\section{Main results}

We use a variational formulation to examine the solution to \eqref{E1.6}. First, note that if $\lambda=0$, we can find solutions $u(x)$ to \eqref{E1.6} from the critical points of the  functional  $\mathcal{J}: H_{s}(\mathbb{R}^{N})\longrightarrow \mathbb{R}$ defined as:
\begin{equation}\label{E2.1}\mathcal{J}(u)=\frac{1}{2}\|\nabla_{s}u\|_{2}^{2}+\frac{1}{2}\int_{\mathbb{R}^{N}}|x|^{2}|u|^{2}dx-\frac{1}{2\sigma+2}
\int_{\mathbb{R}^{N}}|u|^{2\sigma+2}dx,
\end{equation}
where $\|.\|_{L^{2}}$ is the $L^{2}$-norm and $\|\nabla_{s}u\|_{2}$ is defined by
\[
\|\nabla_{s}u\|_{2}^{2}=C_{N,s}\int_{\mathbb{R}^{N}}\int_{\mathbb{R}^{N}}\frac{|u(x)-u(y)|^{2}}{|x-y|^{N+2s}}dxdy,
\]
with some normalization constant $C_{N,s}$. 

We can derive \eqref{E2.1} by multiplying smooth enough test function $v(x)$ on both sides of \eqref{E1.6} and taking the integral over $x$. However, instead of directly finding the critical points of \eqref{E2.1}, we consider a reconstructed variational problem, which can help us to find solutions with different $\lambda$ and any energy. Specifically, for a fixed number $c>0$, we need to solve the following constrained minimization problem. 
\begin{equation}\label{E2.4}
  I_{c}=inf\left\{\mathcal{J}(u): u\in S_{c}\right\},
\end{equation}
with
\begin{equation}\label{E2.5}S_{c}=\left\{u\in \Sigma_{s}(\mathbb{R}^{N}): \int_{\mathbb{R}^{N}}|u|^{2}dx=c^{2}\right\},\end{equation}
where
\begin{equation}\label{E2.7}\Sigma_{s}({\mathbb{R}^{N}})=\left\{u\in H_{s}(\mathbb{R}^{N}):\|u\|_{\Sigma_{s}({\mathbb{R}^{N}})}:=\|u\|_{L^{2}({\mathbb{R}^{N}})}+\|\nabla_{s}u\|_{L^{2}({\mathbb{R}^{N}})}+
\|x u\|_{L^{2}({\mathbb{R}^{N}})}<\infty\right\}.\end{equation}
is a Hilbert space, with corresponding natural inner product.

We claim that for each minimizer $u(x)$ of the constrained minimization problem \eqref{E2.4}, there exists some $\lambda$ such that $(u(x),\lambda)$ is a solution to \eqref{E1.6}. To prove the claim, we first consider $\lambda$ as a Lagrange multiplier, then we define
\begin{equation}\label{E2.8}
\mathcal{J^*}(u)=\mathcal{J}(u)+\lambda(\|u\|^2_{L^2({\mathbb{R}^{N}})}-c^2).
\end{equation}
The minimizer to problem \eqref{E2.4} must be the critical point of \eqref{E2.8}, satisfying: 
\begin{equation}\label{EJ1}
\frac{\partial \mathcal{J^*}(u)}{\partial u}=0
\end{equation}
and
\begin{equation}\label{EJ2}
\frac{\partial \mathcal{J^*}(u)}{\partial \lambda}=0,
\end{equation}
where \eqref{EJ1} implies \eqref{E1.6} and \eqref{EJ2} implies \eqref{E2.5}. In this paper, we will mainly focus on the minimizers of problem \eqref{E2.4}. The following theorem discusses the existence of such minimizers.

\begin{theorem}\label{T1} If $0<\sigma<\frac{2s}{N}$, then \eqref{E2.4} admits a nonnegative, radial and radially decreasing minimizer.
\end{theorem}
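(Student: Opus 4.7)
The plan is to set up a constrained minimization argument in the Hilbert space $\Sigma_s(\mathbb{R}^N)$ and extract a minimizer with the stated symmetry properties by combining symmetric decreasing rearrangement with the compactness that the harmonic potential provides (the embedding result announced as Lemma \ref{L3.1}). First I would establish that $I_c > -\infty$. The only term in $\mathcal{J}$ that can drive it to $-\infty$ is the $L^{2\sigma+2}$ term, so I would invoke the fractional Gagliardo--Nirenberg inequality
\[
\|u\|_{L^{2\sigma+2}}^{2\sigma+2}\leq C\,\|\nabla_s u\|_{L^2}^{N\sigma/s}\,\|u\|_{L^2}^{2\sigma+2-N\sigma/s}.
\]
Under the subcritical hypothesis $\sigma<2s/N$ one has $N\sigma/s<2$, and since $\|u\|_{L^2}=c$ is fixed on $S_c$, Young's inequality absorbs the $\|\nabla_s u\|_{L^2}$ factor into the kinetic term. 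This shows $\mathcal{J}$ is coercive on $S_c$, bounds $I_c$ from below, and shows any minimizing sequence $\{u_n\}\subset S_c$ is bounded in $\Sigma_s$ (the $L^2$-norm is fixed, the kinetic term is bounded, and the potential term $\int |x|^2|u_n|^2\,dx$ is bounded).

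Next I would replace $u_n$ by its Schwarz symmetrization $u_n^* := |u_n|^*$. The $L^p$ norms for all $p$ are preserved, so $u_n^*\in S_c$ and the nonlinear term is unchanged; the fractional Pólya--Szegő inequality gives $\|\nabla_s u_n^*\|_{L^2}\leq \|\nabla_s u_n\|_{L^2}$; and the Hardy--Littlewood inequality applied to the radially increasing weight $|x|^2$ gives $\int |x|^2 |u_n^*|^2\,dx\leq \int |x|^2|u_n|^2\,dx$. Hence $\mathcal{J}(u_n^*)\leq \mathcal{J}(u_n)$, and I obtain a new minimizing sequence consisting of nonnegative, radial, radially decreasing functions, still bounded in $\Sigma_s$.

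Passing to a subsequence, $u_n^*\rightharpoonup u$ weakly in $\Sigma_s$. The key analytic input is Lemma \ref{L3.1}: the harmonic confinement makes the embedding $\Sigma_s\hookrightarrow L^p(\mathbb{R}^N)$ compact for the range of $p$ that covers $p=2$ and $p=2\sigma+2$ in the subcritical regime. Strong $L^2$ convergence preserves the constraint so $u\in S_c$, and strong $L^{2\sigma+2}$ convergence gives $\int|u_n^*|^{2\sigma+2}\to \int|u|^{2\sigma+2}$. Combined with weak lower semicontinuity of $u\mapsto \|\nabla_s u\|_{L^2}^2$ and of $u\mapsto \int|x|^2|u|^2\,dx$, this yields $\mathcal{J}(u)\leq \liminf \mathcal{J}(u_n^*)=I_c$, so $u$ is a minimizer. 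Nonnegativity, radiality and radial monotonicity are inherited from $u_n^*$ by a.e.\ pointwise convergence (or directly because each of these properties is preserved under weak $L^2$ limits once one uses a.e.\ convergence along a further subsequence).

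The principal obstacle I expect is the compactness step: without the harmonic potential, the embedding $H_s(\mathbb{R}^N)\hookrightarrow L^{2\sigma+2}(\mathbb{R}^N)$ is not compact on the full space, and minimizing sequences could escape to infinity. This is precisely why the authors build the weighted space $\Sigma_s$ and prove Lemma \ref{L3.1}; once that compactness is in hand the rest of the argument follows the standard symmetrization--weak convergence template. A secondary technical point is the fractional Pólya--Szegő inequality, which I would simply cite from the literature on rearrangements for the Gagliardo seminorm rather than reprove.
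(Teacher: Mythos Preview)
Your proposal is correct and uses the same ingredients as the paper: coercivity via fractional Gagliardo--Nirenberg (the content of Lemma~\ref{L3.2}), compactness of the embedding $\Sigma_s\hookrightarrow L^p$ (Lemma~\ref{L3.1}), weak lower semicontinuity, and the rearrangement inequalities for the Gagliardo seminorm and the potential term. The only structural difference is the order of operations: you symmetrize the minimizing sequence first and then pass to the limit (inheriting the symmetry through a.e.\ convergence), whereas the paper first extracts a minimizer $u$ via compactness and lower semicontinuity and only afterward replaces it by $|u|^\ast$; both variants are standard and equivalent here.
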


\begin{remark}\label{R2.1} The condition $0<\sigma<\frac{2s}{N}$ is important in our proof of the existence of minimizers. For the critical case $\sigma=\frac{2s}{N}$, we were able to obtain interesting results (section 7).
\end{remark}

After we construct the ground state solutions, we further investigate their stability. By the definition of \eqref{E1.5}, the ground state solution moves around a circle when time changes. Therefore we consider and prove the orbital stability of ground state solution (Def \ref{D4.1}).

\begin{theorem}\label{T2}
 Suppose that $0<\sigma<\frac{2s}{N}$ and \eqref{E1.1} has a unique solution with conserved mass \eqref{E1.3} and energy \eqref{E1.31}, then the ground state solutions constructed in Theorem \ref{T1} are orbitally stable.
 \end{theorem}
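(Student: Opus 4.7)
The plan is to follow the classical Cazenave--Lions contradiction scheme, which combines conservation of mass and energy with the compactness of minimizing sequences already exploited in Theorem \ref{T1}. First I would negate orbital stability: one assumes there exist $\varepsilon_0 > 0$, a sequence $\psi_{0,n} \in \Sigma_s(\mathbb{R}^N)$ converging in the $\Sigma_s$-norm to a ground state $u$ produced by Theorem \ref{T1}, and times $t_n > 0$ such that
\begin{equation*}
\inf_{\theta \in \mathbb{R},\ v \in \mathcal{G}_c} \big\| \psi_n(t_n, \cdot) - e^{i\theta} v \big\|_{\Sigma_s} \ \geq\ \varepsilon_0,
\end{equation*}
where $\mathcal{G}_c$ denotes the set of minimizers of \eqref{E2.4} and $\psi_n$ is the unique solution of \eqref{E1.1} with initial datum $\psi_{0,n}$ granted by Theorem \ref{T4.1}.

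Next I would bring the two conserved quantities \eqref{E1.3} and \eqref{E1.31} into play. Continuity of $M$ and $E$ on $\Sigma_s$ gives $M(\psi_{0,n}) \to M(u) = c^2$ and $E(\psi_{0,n}) \to E(u) = I_c$, and both quantities are preserved in time. Setting $v_n := \alpha_n \, \psi_n(t_n, \cdot)$ with $\alpha_n := c / \| \psi_n(t_n, \cdot) \|_{L^2} \to 1$ forces $v_n \in S_c$, while a short continuity argument on each of the three pieces of $\mathcal{J}$ (the fractional seminorm, the weighted $L^2$ term and the $L^{2\sigma+2}$ term) shows $\mathcal{J}(v_n) \to I_c$. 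Thus $\{v_n\}$ is a minimizing sequence for \eqref{E2.4}.

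The crucial step is then to recycle the compactness argument underlying Theorem \ref{T1}: the compact embedding of $\Sigma_s(\mathbb{R}^N)$ into $L^p(\mathbb{R}^N)$ for the relevant exponents (Lemma \ref{L3.1}), together with weak lower semicontinuity of the convex parts of $\mathcal{J}$ and a Brezis--Lieb type decomposition, forces a subsequence of $\{v_n\}$ to converge strongly in $\Sigma_s$ to some $\tilde u \in \mathcal{G}_c$. Because $\alpha_n \to 1$, this upgrades to strong $\Sigma_s$-convergence of $\psi_n(t_n, \cdot)$ to $\tilde u$, which contradicts the lower bound $\varepsilon_0$ and closes the argument.

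The main obstacle I anticipate is making the normalization rigorous while preserving the convergence of the energy, i.e. controlling $\mathcal{J}(\alpha_n w) - \mathcal{J}(w)$ uniformly along a sequence whose $\Sigma_s$-norm is only a priori bounded. This boundedness itself must be extracted from $\mathcal{J}(v_n) \to I_c$ and $\|v_n\|_{L^2} = c$ via coercivity of $\mathcal{J}$ on $S_c$, which holds precisely in the subcritical range $0 < \sigma < \tfrac{2s}{N}$ assumed here. The lack of translation invariance in \eqref{E1.1}, a consequence of the harmonic potential $|x|^2$, is in fact a blessing: no concentration--compactness dichotomy analysis is required, since the harmonic trap already supplies the compact embedding. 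Finally, the uniqueness hypothesis supplied by Theorem \ref{T4.1} is precisely what legitimizes the use of a single solution $\psi_n(t)$ and of the conservation identities throughout the argument.
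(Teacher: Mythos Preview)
Your proposal is correct and follows the same Cazenave--Lions contradiction scheme that the paper uses in its Theorem~\ref{T4.1}: negate stability, invoke the two conservation laws to produce an (almost) minimizing sequence at the times $t_n$, and then apply the compactness furnished by Lemma~\ref{L3.1} to force strong $\Sigma_s$-convergence to an element of the ground-state set, contradicting the assumed $\varepsilon_0$-gap.

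The implementations differ in one tactical respect. You normalize by a scalar, setting $v_n=\alpha_n\psi_n(t_n,\cdot)$ with $\alpha_n\to 1$ and then arguing $\mathcal{J}(v_n)\to I_c$ directly from boundedness in $\Sigma_s$; the paper instead passes first to the real-valued sequence $|\omega_n|$, uses $\widetilde{\mathcal J}(\omega_n)\ge\mathcal J(|\omega_n|)$ together with $\mathcal I_c=\widetilde{\mathcal I}_c$ (Lemma~\ref{lem3}\,(vi)) to show $|\omega_n|$ is minimizing for the \emph{real} problem, obtains $|\omega_n|\to\varphi$ in $\Sigma_s$, and only then recovers strong convergence of the full complex sequence. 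Your route is a bit shorter and avoids the modulus detour; the paper's route makes the link between the real and complex problems explicit and yields as a byproduct that the limit has $\|\nabla_s\omega\|_2=\lim\|\nabla_s\omega_n\|_2$. One small point to tidy in your write-up: since $\psi_n(t_n,\cdot)$ is complex-valued, your $v_n$ lives in $\widetilde S_c$ and the limit lies in $\widetilde{\mathcal O}_c$, so either work throughout in the complex space $\widetilde\Sigma_s$ (the compact embedding of Lemma~\ref{L3.1} extends componentwise) or else phrase the orbit as $\widetilde{\mathcal O}_c$ rather than $\{e^{i\theta}v:v\in\mathcal G_c\}$, matching Definition~\ref{D4.1}.
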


\section{The minimization problem}
In this section, we will establish the existence of ground state solutions of \eqref{E1.6}, the main difficulty comes from the lack of compactness of the Sobolev embeddings. Usually, at least when potential in \eqref{E1.1} is radially
symmetric and radially increasing, such a difficulty is overcame by considering the appropriate function space. More precisely, we have

\begin{lemma}\label{L3.1}
Let $2\leq p< \frac{2N}{N-2s}$, then  the  embedding $\Sigma_{s}({\mathbb{R}^{N}})\subset L^p(\mathbb{R}^N)$ is compact.
\end{lemma}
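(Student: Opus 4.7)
The plan is the standard two-scale (local vs.\ tails) argument that one uses to prove Rellich-type compactness for Schrödinger operators with confining potentials, now adapted to the fractional setting. Let $\{u_n\}\subset \Sigma_s(\mathbb{R}^N)$ be bounded. Since $\Sigma_s(\mathbb{R}^N)$ is a Hilbert space, I extract a subsequence (still denoted $u_n$) with $u_n\rightharpoonup u$ in $\Sigma_s(\mathbb{R}^N)$; writing $v_n = u_n - u$ reduces the claim to showing $v_n\to 0$ strongly in $L^p(\mathbb{R}^N)$ for every $p\in[2,2N/(N-2s))$.

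For the local part, I fix $R>0$ and restrict to the ball $B_R$. The sequence $v_n|_{B_R}$ is bounded in the fractional Sobolev space $H^s(B_R)$, and by the fractional Rellich--Kondrachov compactness theorem (for $2\le p<2N/(N-2s)$ the embedding $H^s(B_R)\hookrightarrow L^p(B_R)$ is compact on bounded Lipschitz domains), we get $v_n\to 0$ strongly in $L^p(B_R)$ for each fixed $R$.

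For the tail part, the weight $|x|^2$ provides uniform decay: a Chebyshev-type estimate yields
\[
\int_{|x|>R}|v_n|^2\,dx \;\le\; \frac{1}{R^2}\int_{|x|>R}|x|^2|v_n|^2\,dx \;\le\; \frac{C}{R^2},
\]
uniformly in $n$. To promote this $L^2$ smallness to $L^p$, I use the continuous embedding $H_s(\mathbb{R}^N)\hookrightarrow L^{2N/(N-2s)}(\mathbb{R}^N)$, which bounds $\|v_n\|_{L^{2N/(N-2s)}(\mathbb{R}^N)}$ uniformly, and then Hölder interpolation:
\[
\|v_n\|_{L^p(|x|>R)} \;\le\; \|v_n\|_{L^2(|x|>R)}^{\theta}\,\|v_n\|_{L^{2N/(N-2s)}(|x|>R)}^{1-\theta},
\]
where $\theta\in(0,1]$ is determined by $\tfrac{1}{p}=\tfrac{\theta}{2}+\tfrac{(1-\theta)(N-2s)}{2N}$. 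Since $p<2N/(N-2s)$ forces $\theta>0$, the $R^{-2}$ decay passes to $L^p$ and the tails are small uniformly in $n$.

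Combining the two estimates gives strong convergence $v_n\to 0$ in $L^p(\mathbb{R}^N)$, hence $u_n\to u$ in $L^p$, proving compactness. The one non-routine ingredient is the local compact embedding $H^s(B_R)\hookrightarrow L^p(B_R)$ for $p<2N/(N-2s)$; this is the main technical hurdle, but it is by now a classical result (e.g.\ Di Nezza--Palatucci--Valdinoci) and can be cited. The sharpness of the upper bound $p<2N/(N-2s)$ is inherited from the Sobolev exponent: at the endpoint $p=2N/(N-2s)$ one loses the exponent $\theta>0$ in the interpolation and the tails need not vanish uniformly.
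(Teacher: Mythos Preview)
Your proof is correct and follows the standard confining-potential compactness argument. The paper's proof shares the same core ingredients for the case $p=2$: local fractional Rellich on balls $B_R$ combined with the tail estimate $\int_{|x|>R}|u|^2\le R^{-2}\|xu\|_2^2$ coming from the weight, though the paper combines them via a diagonal extraction rather than your direct $\varepsilon/2$ split.

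The genuine difference is in the case $2<p<\tfrac{2N}{N-2s}$. The paper handles this by asserting that $H_s(\mathbb{R}^N)\hookrightarrow L^p(\mathbb{R}^N)$ is compact, which is false on all of $\mathbb{R}^N$ (translation invariance obstructs compactness; this is precisely the lack of compactness the paper itself flags in the introduction). Your route avoids this: you first obtain uniform $L^2$-smallness of the tails from the weight, and then promote it to $L^p$-smallness by H\"older interpolation against the \emph{continuous} critical Sobolev embedding. This interpolation step, with the observation that $\theta>0$ exactly when $p$ is strictly subcritical, is what makes the argument work uniformly in $p$ and is the cleaner way to close the proof.
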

\begin{pf}
For any $u(x)\in\Sigma_s$, $\|u\|_{H_s({\mathbb{R}^{N}})}\leq \|u\|_{\Sigma_s({\mathbb{R}^{N}})}$, which implies $\Sigma_s({\mathbb{R}^{N}})$ can be embedding into $H_s$. On the other hand, by Sobolov embedding theorem, $H_s({\mathbb{R}^{N}})$ can be compactly embedded into $L^p(\mathbb{R}^N)$ for $2< p< \frac{2N}{N-2s}$. Therefore, $\Sigma_s({\mathbb{R}^{N}})$ can also be compactly embedded into $L^p(\mathbb{R}^N)$ for $2< p< \frac{2N}{N-2s}$.

Second, when $p=2$, choose $R>0$, then for any $u(x)\neq 0\in \Sigma_s({\mathbb{R}^{N}})$, we have
\begin{equation}\label{E3.0}
\int_{|x|\geq R}|u|^2dx\leq |R|^{-2}\int_{|x|\geq R}|x|^2|u|^2dx< |R|^{-2}\|u\|_{\Sigma_s({\mathbb{R}^{N}})}
\end{equation}

By the classical Sobolev embedding theorem, for any fixed $R$, $H_{s}(|x|<R)$ is compactly embedded in $L^2(|x|<R)$. Therefore, for any bounded sequence in $\Sigma_s$, we choose $R_n>0\rightarrow \infty$ and for each $n$, pick out the subsequence that converges in $L^2(|x|<R_n)$ from former convergence sequence in $L^2(|x|<R_{n-1})$, finally using the diagonal method combined with \eqref{E3.0} we find the convergence sequence in $L^2(\mathbb{R}^N)$.
\end{pf}

Then we have a lemma showing the existence of $I_{c}$ and boundedness of minimizing sequence.
\begin{lemma}\label{L3.2}
If $0<\sigma<\frac{2s}{N}$, then $I_{c}>-\infty$ and all minimizing sequences of \eqref{E2.4} are bounded in $\Sigma_{s}(\mathbb{R}^{N})$.
\end{lemma}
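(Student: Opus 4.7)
The plan is to bound the nonlinear term $\int |u|^{2\sigma+2} dx$ from above by a sublinear function of the kinetic energy, so that on $S_c$ the functional $\mathcal{J}$ is coercive in $\Sigma_s$. The key tool is the fractional Gagliardo–Nirenberg inequality: for any $u \in H_s(\mathbb{R}^N)$ and $2 < p < \tfrac{2N}{N-2s}$,
\[
\|u\|_{L^p} \leq C\, \|\nabla_s u\|_{L^2}^{\theta}\, \|u\|_{L^2}^{1-\theta}, \qquad \theta = \frac{N(p-2)}{2ps}.
\]
Specializing to $p = 2\sigma+2$ yields the exponent $(2\sigma+2)\theta = \tfrac{N\sigma}{s}$, so that for every $u \in S_c$ one gets
\[
\int_{\mathbb{R}^N} |u|^{2\sigma+2}\, dx \;\leq\; C_1\, c^{(2\sigma+2)(1-\theta)}\, \|\nabla_s u\|_{L^2}^{N\sigma/s}.
\]

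The crucial observation is that the hypothesis $\sigma < \tfrac{2s}{N}$ is exactly equivalent to $\tfrac{N\sigma}{s} < 2$. Thus $\|\nabla_s u\|_{L^2}$ appears in the nonlinear bound with a power strictly less than the power $2$ coming from the kinetic term in $\mathcal{J}$. Young's inequality then lets me absorb the nonlinear contribution: for any $\varepsilon > 0$ there is a constant $C_\varepsilon = C_\varepsilon(c,\sigma,s,N)$ such that
\[
\frac{C_1}{2\sigma+2}\, c^{(2\sigma+2)(1-\theta)}\, \|\nabla_s u\|_{L^2}^{N\sigma/s} \;\leq\; \varepsilon \|\nabla_s u\|_{L^2}^{2} + C_\varepsilon.
\]

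Combining these estimates, for every $u \in S_c$,
\[
\mathcal{J}(u) \;\geq\; \left(\tfrac{1}{2}-\varepsilon\right)\|\nabla_s u\|_{L^2}^{2} + \tfrac{1}{2}\int_{\mathbb{R}^N} |x|^{2}|u|^{2}\, dx - C_\varepsilon.
\]
Choosing e.g.\ $\varepsilon = 1/4$ produces a uniform lower bound independent of $u \in S_c$, which immediately gives $I_c > -\infty$. Moreover, if $\{u_n\} \subset S_c$ is a minimizing sequence, then $\mathcal{J}(u_n)$ is bounded above, so the same inequality forces $\|\nabla_s u_n\|_{L^2}$ and $\|\,|x|\,u_n\|_{L^2}$ to remain bounded, and together with the mass constraint $\|u_n\|_{L^2} = c$ this yields boundedness in $\Sigma_s(\mathbb{R}^N)$.

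The only genuine obstacle is verifying (or citing) the fractional Gagliardo–Nirenberg inequality in the precise form above; once that is in hand, the argument is essentially a counting of scaling exponents and an application of Young's inequality, with the subcritical condition $\sigma < 2s/N$ being exactly what makes the exponents work.
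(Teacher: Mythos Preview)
Your proposal is correct and follows essentially the same route as the paper: apply the fractional Gagliardo--Nirenberg inequality with $p=2\sigma+2$, observe that the resulting exponent $(2\sigma+2)\theta = N\sigma/s$ on $\|\nabla_s u\|_{L^2}$ is strictly less than $2$ precisely when $\sigma<2s/N$, and then use Young's inequality to absorb the nonlinear term into the kinetic part. The paper carries out the same computation with slightly different bookkeeping of the Young exponents, but the argument is identical in substance.
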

\begin{proof}
First, we prove that $\mathcal{J}(u)$ is bounded from below. Using the fractional Gagliardo-Nirenberg inequality \cite{HOW}, we certainly have
\begin{equation}\label{E3.1}
\|u\|_{2\sigma+2}\leq K\|u\|_{2}^{1-\theta}\|\nabla_{s}u\|_{2}^{\theta},
\end{equation}
for some positive constant $K$, where $\theta=\frac{Ns}{2s(\sigma+1)}.$\\

On the other hand, let $\epsilon>0$, and  $p, q>1$ such that $\frac{1}{p}+\frac{1}{q}=1$, then, using  Young's inequality, one gets
\begin{equation}\label{E3.2}
\|u\|_{2}^{(2\sigma+2)(1-\theta)}\|\nabla_{s}u\|_{2}^{\theta(2\sigma+2)}\leq\frac{1}{p}\epsilon^{p}\|\nabla_{s}u\|_{2}^{p\theta(2\sigma+2)}+\frac{1}{q\epsilon^{q}}\|u\|_{2}^{q(1-\theta)(2\sigma+2)}.
\end{equation}
Combining \eqref{E3.1} and \eqref{E3.2}, we obtain  for any $u\in S_{c}$,
\begin{equation}\label{E3.3}
\int_{\Omega}|u(x)|^{2\sigma+2}dx\leq \frac{\epsilon^{p}K^{2\sigma+2}}{p}\|\nabla_{s}u\|_{2}^{2}+ \frac{K^{2\sigma+2}}{q\epsilon^{q}}c^{2q(1-\theta)(1+\sigma)},
\end{equation}
where $p=\frac{1}{\theta(1+\sigma)}=\frac{2s}{N\sigma},\ q=\frac{1}{1-\theta(1+\sigma)}$.\\
Hence, from (\ref{E3.3}) we get:
\begin{eqnarray}
  \mathcal{J}(u)&=&\frac{1}{2}\|\nabla_{s}u\|_{2}^{2}+\frac{1}{2}\int_{\mathbb{R}^{N}}|x|^{2}|u|^{2}dx-\frac{1}{2\sigma+2}
\int_{\mathbb{R}^{N}}|u|^{2\sigma+2}dx \\
   &\geq& \frac{1}{2}\|\nabla_{s}u\|_{2}^{2}+\frac{1}{2}\int_{\mathbb{R}^{N}}|x|^{2}|u|^{2}dx-\frac{1}{2\sigma+2}\left(\frac{\epsilon^{p}K^{2\sigma+2}}{p}\|\nabla_{s}u\|_{2}^{2}+ \frac{K^{2\sigma+2}}{q\epsilon^{q}}c^{2q(1-\theta)(1+\sigma)}\right) \\
   &\geq& \left(\frac{1}{2}-\frac{\epsilon^{p}K^{2\sigma+2}}{2p(\sigma+1)}\right)\|\nabla_{s}u\|_{2}^{2}+\frac{1}{2}\int_{\mathbb{R}^{N}}|x|^{2}|u|^{2}dx-
   \frac{K^{2\sigma+2}c^{2q(1-\theta)(1+\sigma)}}{2q(1+\sigma)\epsilon^{q}}.\label{E3.6}
\end{eqnarray}

Then we choose $\epsilon$ small enough in \eqref{E3.6} to make $\left(\frac{1}{2}-\frac{\epsilon^{p}K^{2\sigma+2}}{2p(\sigma+1)}\right)>0$, which implies that $I_{c}>-\infty$ and that for all minimizing sequences $\{u_n\}$, $\mathcal{J}(u_n)$ is bounded from above, which implies $\{u_n\}$ is bounded in $\Sigma_{s}(\mathbb{R}^{N})$ by \eqref{E3.6}.
\end{proof}

Now, we can use compactness(Lemma \ref{L3.1}) and boundedness(Lemma \ref{L3.2}) to prove our existence Theorem \ref{T1}.

\begin{proof}

Let $\{u_{n}\}$ be a minimizing sequence of \eqref{E2.4}. By Lemma \ref{L3.2}, $\{u_{n}\}$ is bounded in $\Sigma_{s}(\mathbb{R}^{N})$. Up to a subsequence, there exists $u$ such that  $u_{n}$ converges weakly to $u$ in $\Sigma_{s}(\mathbb{R}^{N})$.

Since $2\sigma+2<\frac{2N}{N-2s}$ and  $\Sigma_{s}(\mathbb{R}^{N})$ is compactly embedded in $L^{p}(\mathbb{R}^{N})$ for any $p$ such that $2\leq p<\frac{2N}{N-2s}$, we can further prove that $u_{n}$ will converge strongly to $u$ in $L^2({\mathbb{R}^{N}})$ and $L^{2\sigma+2}({\mathbb{R}^{N}})$ (Lemma \ref{L3.1}). In particular, $u_n\rightarrow u$ in $L^2({\mathbb{R}^{N}})$ implies $u\in \mathcal{S}_c$.

 On the other hand, thanks to the lower semi-continuity, we have $\|xu\|_{L^2({\mathbb{R}^{N}})}+\|\nabla_{s}u\|_{L^2({\mathbb{R}^{N}})}\leq \liminf_{n\rightarrow\infty} \|\nabla_{s}u_{n}\|_{L^2({\mathbb{R}^{N}})}+\|xu_n\|_{L^2({\mathbb{R}^{N}})}$. Therefore
\begin{equation}\label{E3.7}
\mathcal{I}_{c}\leq \mathcal{J}(u)\leq \liminf_{n\rightarrow\infty} \mathcal{J}(u_{n})=\mathcal{I}_{c},
\end{equation}
which yields $u$ is a minimizer.

The second step consists in constructing a nonnegative, radial and radially decreasing minimizer. First, note that:
\begin{equation}\label{E3.10}
\|\nabla_{s}|u|\|_{L^2({\mathbb{R}^{N}})}\leq \|\nabla_{s}u\|_{L^2({\mathbb{R}^{N}})},
\end{equation}
which implies $\mathcal{J}(|u|)\leq\mathcal{J}(u)$. Then we use the Schwarz symmetrization \cite{R5}. We construct a
symmetrization function $u^{\ast}$, which is a radially-decreasing function from $\mathbb{R}^{N}$ into $\mathbb{R}$ with the property
that $$meas\left\{x \in \mathbb{R}^{N}: u(x) > \mu\right\} = meas\left\{x \in \mathbb{R}^{N}: u^{\ast}(x) > \mu\right\}\;\mbox{ for any }\;\mu > 0.$$ It's well-known \cite{R5} that
\begin{equation}\label{E3.8}
\left\{
    \begin{array}{ll}
      \int_{\mathbb{R}^N}|u|^{2\sigma+2}dx= \int_{\mathbb{R}^N}|u^{\ast}|^{2\sigma+2}dx\\
      \int_{\mathbb{R}^N}|u|^{2}dx= \int_{\mathbb{R}^N}|u^{\ast}|^{2}dx,
    \end{array}
  \right.
\end{equation}

Besides, from \cite{R3},\cite{R8}, we also have
\begin{equation}\label{E3.9}
\left\{
    \begin{array}{ll}
      \|\nabla_{s}u^{\ast}\|_{L^2(\mathbb{R}^N)}\leq \|\nabla_{s}u\|_{L^2(\mathbb{R}^N)}\\
      \int_{\mathbb{R}^N}|x|^{2}|u^{\ast}|^{2}dx\leq \int_{\mathbb{R}^N}|x|^{2}|u|^{2}dx.
    \end{array}
  \right.
\end{equation}

Combining (\ref{E3.8}) and (\ref{E3.9}), we obtain
$$\mathcal{J}(|u|^{\ast})\leq \mathcal{J}(|u|)\leq\mathcal{J}(u),\quad \text{for any}\ u\in\Sigma_s(\mathbb{R}^N)$$

\end{proof}
\begin{remark}\label{R3.1}
By \eqref{E3.7}, and weakly convergence, we can also see $\|xu\|_{L^2({\mathbb{R}^{N}})}+\|u\|_{L^2({\mathbb{R}^{N}})}+\|\nabla_{s}u\|_{L^2({\mathbb{R}^{N}})}=\lim_{n\rightarrow\infty}\|\nabla_{s}u_{n}\|_{L^2({\mathbb{R}^{N}})}+\|u_n\|_{L^2({\mathbb{R}^{N}})}+\|xu_n\|_{L^2({\mathbb{R}^{N}})}$, which implies there is a minimizing subseqence $u_{n_k}$ converging to $u$ in $\Sigma_s$.
\end{remark}

\begin{remark}\label{R3.2} If $u\in\Sigma_s(\mathbb{R}^N)$ is a minimizer to \eqref{E2.4}, we must have 
\begin{equation}\label{equalcase}
\mathcal{I}_c=\mathcal{J}(u)=\mathcal{J}(|u|)=\mathcal{J}(|u|^*).
\end{equation}
By \eqref{E3.9}, \eqref{equalcase} implies
\begin{align}
&\|\nabla_{s}u\|_{L^2(\mathbb{R}^N)}=\|\nabla_{s}|u|\|_{L^2(\mathbb{R}^N)}\label{equal1},\\
&\int_{\mathbb{R}^N}|x|^{2}(|u|^{\ast})^{2}dx=\int_{\mathbb{R}^N}|x|^{2}|u|^{2}dx\label{equal2}.
\end{align}
By \cite{R3}, \eqref{equal1} implies $u=|u|$ a.e. and \eqref{equal2} implies $|u|=|u|^*$ a.e.. 
\end{remark}

\section{Orbital stability}

In this section, we will deal with the orbital stability of the ground state solutions. Let us introduce the appropriate Hilbert space:
\begin{equation*}
\label{E4.4}\widetilde{\Sigma}_{s}(\mathbb{R}^{N}):=\left\{\omega=u+iv:(u,v)\in\Sigma_{s}(\mathbb{R}^{N})\times \Sigma_{s}(\mathbb{R}^{N})\right\},
\end{equation*}
equipped with the norm $\|\omega\|^{2}_{\widetilde{\Sigma}_{s}(\mathbb{R}^{N})}=\|u\|^{2}_{\Sigma_{s}(\mathbb{R}^{N})}+\|v\|^{2}_{\Sigma_{s}(\mathbb{R}^{N})}$, which is a Hilbert space.

In term of the new coordinates, the energy functional reads
\begin{equation*}
\label{E4.5}\widetilde{\mathcal{J}}(\omega)=\frac{1}{2}\|\nabla_{s}\omega\|_{2}^{2}+\frac{1}{2}\int_{\mathbb{R}^{N}}
|x|^2|\omega(x)|^{2}dx-\frac{1}{2\sigma+2}\int_{\mathbb{R}^{N}}|\omega|^{2\sigma+2}dx,
\end{equation*}
where $\|\nabla_s\omega\|_{{L^2({\mathbb{R}^{N}})}}^{2}=\|\nabla_{s}u\|_{{L^2({\mathbb{R}^{N}})}}^{2}+\|\nabla_{s}v\|_{{L^2({\mathbb{R}^{N}})}}^{2}$, we can also get $\widetilde{\mathcal{J}}(\omega)$ remains as a constant with time $t$ if $\omega(t,x,v)$ is a solution to $\eqref{E1.1}$.

Then, for all $c>0$, we set a similar constrained minimization problem
\begin{equation*}\label{E4.6}\widetilde{\mathcal{I}}_{c}=inf\left\{\widetilde{\mathcal{J}}(\omega),\;\omega\in \widetilde{\mathcal{S}}_{c}\right\},\end{equation*}
where $\mathcal{\widetilde{S}}_{c}$ is defined by:
 \begin{equation*}\label{E4.7}
 \mathcal{\widetilde{S}}_{c}=\left\{\omega\in\widetilde{\Sigma}_{s}(\mathbb{R}^{N}),\;\;\int_{\mathbb{R}^{N}}|\omega(x)|^{2}dx=c^{2}\right\}.
 \end{equation*}
We also introduce the following sets
\begin{equation*}\label{E4.8}\mathcal{O}_{c}=\left\{u\in\mathcal{S}_{c}:\mathcal{I}_{c}=\mathcal{J}(u)\right\},\quad\widetilde{\mathcal{O}}_{c}=\left\{\omega\in\widetilde{\mathcal{S}}_{c}:\widetilde{\mathcal{I}}_{c}=\widetilde{\mathcal{J}}(\omega)\right\}.
\end{equation*}

Proceeding as in \cite{haj2, haj1}, we have the following lemma:

\begin{lemma}\label{lem3} If $0<\sigma<\frac{2s}{N}$, then the following properties hold true:

\textbf{(i)} The energy functional $\mathcal{J}$ and $\widetilde{\mathcal{J}}$ are of class $C^{1}$ on $\Sigma_{s}(\mathbb{R}^{N})$ and $\widetilde{\Sigma}_{s}(\mathbb{R}^{N})$ respectively.

\textbf{(ii)}  There exists a constant $C > 0$ such that
$$\|\mathcal{J}'(u)\|_{\Sigma^{-1}_s(\mathbb{R}^{N})}\leq C\left(\|u\|_{\Sigma_{s}(\mathbb{R}^{N})}+ \|u\|_{\Sigma_{s}(\mathbb{R}^{N})}^{2\sigma+1}\right),\quad \|\widetilde{\mathcal{J}}'(\omega)\|_{\Sigma^{-1}_s(\mathbb{R}^{N})}\leq C\left(\|\omega\|_{\Sigma_{s}(\mathbb{R}^{N})}+ \|\omega\|_{\Sigma_{s}(\mathbb{R}^{N})}^{2\sigma+1}\right)$$.
\textbf{(iii)} All minimizing sequences for $\widetilde{\mathcal{I}}_{c}$ are bounded in $\widetilde{\Sigma}_{s}(\mathbb{R}^{N})$ and all minimizing
sequences for $\mathcal{I}_{c}$ are bounded in $\Sigma_{s}(\mathbb{R}^{N})$.

\textbf{(iv)} The mappings $c\longmapsto \mathcal{I}_{c},\,\widetilde{\mathcal{I}}_{c}$ are continuous.

\textbf{(v)} Any minimizing sequence of $\mathcal{I}_{c}$, $\widetilde{\mathcal{I}}_{c}$ are relatively compact in $\Sigma_{s}(\mathbb{R}^{N})$, $\widetilde{\Sigma_{s}}(\mathbb{R}^{N})$.

\textbf{(vi)} For any $c>0$,
\begin{equation*}\label{E4.9}
\mathcal{I}_c=\widetilde{\mathcal{I}}_c.
\end{equation*}

\end{lemma}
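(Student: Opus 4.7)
My plan is to dispatch the six items in order, leaning throughout on three ingredients from Section~3: the fractional Gagliardo-Nirenberg inequality \eqref{E3.1}, the compact embedding $\Sigma_s\hookrightarrow L^p$ for $2\le p<2N/(N-2s)$ (Lemma \ref{L3.1}), and the coercivity estimate of Lemma \ref{L3.2}. For (i) I would decompose $\mathcal{J}=\mathcal{Q}-\tfrac{1}{2\sigma+2}N$, where $\mathcal{Q}(u)=\tfrac12(\|\nabla_s u\|_2^2+\|xu\|_2^2)$ is a continuous quadratic form on $\Sigma_s$ (hence $C^\infty$) and $N(u)=\int_{\mathbb{R}^N}|u|^{2\sigma+2}\,dx$. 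The Gateaux derivative $N'(u)\cdot v=(2\sigma+2)\int |u|^{2\sigma}uv\,dx$ is continuous in $u$ via Hölder's inequality with conjugate exponents $(2\sigma+2)/(2\sigma+1)$ and $2\sigma+2$ combined with the continuous embedding $\Sigma_s\hookrightarrow L^{2\sigma+2}$ (which holds since $2\sigma+2<2+4s/N\le 2N/(N-2s)$ when $\sigma<2s/N$), upgrading Gateaux differentiability to Frechet $C^1$; the complex case is identical with $|u|^{2\sigma}u$ replaced by $|\omega|^{2\sigma}\omega$. Part (ii) then follows by bounding $|\mathcal{J}'(u)\cdot v|$: the quadratic pieces contribute $\|u\|_{\Sigma_s}\|v\|_{\Sigma_s}$, while the nonlinearity contributes $\|u\|_{2\sigma+2}^{2\sigma+1}\|v\|_{2\sigma+2}\le C\|u\|_{\Sigma_s}^{2\sigma+1}\|v\|_{\Sigma_s}$, giving the advertised bound on $\|\mathcal{J}'(u)\|_{\Sigma_s^{-1}}$.

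Part (iii) is a direct repetition of Lemma \ref{L3.2}; for the complex case I would use $\|\nabla_s\omega\|_2^2=\|\nabla_s u\|_2^2+\|\nabla_s v\|_2^2$ and $\|x\omega\|_2^2=\|xu\|_2^2+\|xv\|_2^2$ to reduce the coercivity estimate to sums of real pieces. For (iv) I would exploit scaling: the map $u\mapsto (c'/c)u$ sends $\mathcal{S}_c$ onto $\mathcal{S}_{c'}$, so applying it to an $\varepsilon$-minimizer at level $c$ yields $\mathcal{I}_{c'}\le\mathcal{J}((c'/c)u)$, and each piece of $\mathcal{J}$ depends continuously on the scaling factor; letting $c'\to c$ gives $\limsup_{c'\to c}\mathcal{I}_{c'}\le\mathcal{I}_c$, with the reverse $\liminf\ge\mathcal{I}_c$ obtained symmetrically by starting from near-minimizers at level $c'$.

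Part (v) is essentially the content of Remark \ref{R3.1}: a minimizing sequence $\{u_n\}$ of $\mathcal{I}_c$ is bounded in $\Sigma_s$ by (iii), so up to a subsequence $u_n\rightharpoonup u$ in $\Sigma_s$ and $u_n\to u$ in $L^2\cap L^{2\sigma+2}$ by Lemma \ref{L3.1}; hence $u\in\mathcal{S}_c$, $\mathcal{J}(u)\le\mathcal{I}_c$ by lower semi-continuity, forcing $\mathcal{J}(u)=\mathcal{I}_c$ and $N(u_n)\to N(u)$. This gives $\|\nabla_s u_n\|_2^2+\|xu_n\|_2^2\to \|\nabla_s u\|_2^2+\|xu\|_2^2$, and since each of these two quadratic pieces is separately weakly lower semi-continuous, both inequalities must be equalities, upgrading the weak convergence to strong convergence in $\Sigma_s$; the complex case is the same argument applied coordinatewise to $u_n$ and $v_n$. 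For (vi) I would prove both inequalities: given $\omega=u+iv\in\widetilde{\mathcal{S}}_c$, the function $\rho=|\omega|$ satisfies $\|\rho\|_2=\|\omega\|_2=c$, $\|\rho\|_{2\sigma+2}=\|\omega\|_{2\sigma+2}$, $\|x\rho\|_2=\|x\omega\|_2$, and the diamagnetic-type bound $\|\nabla_s\rho\|_2\le\|\nabla_s\omega\|_2$ (the same inequality invoked in \eqref{E3.10}), whence $\rho\in\mathcal{S}_c$ and $\mathcal{J}(\rho)\le\widetilde{\mathcal{J}}(\omega)$, so $\mathcal{I}_c\le\widetilde{\mathcal{I}}_c$; the reverse is trivial since any real minimizer viewed as $u+i\cdot 0$ lies in $\widetilde{\mathcal{S}}_c$ with the same energy. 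The main technical subtlety is the upgrade from weak to strong convergence in (v), whose success depends crucially on the compact embedding into $L^2$ afforded by the harmonic weight, i.e., on the very mechanism by which the harmonic potential recovers compactness on the unbounded domain $\mathbb{R}^N$.
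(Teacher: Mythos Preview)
Your proposal is correct and tracks the paper's proof closely. Items (i), (ii), (iii), (v), (vi) are argued exactly as the paper does: quadratic part smooth plus H\"older on the nonlinearity for (i)--(ii), direct appeal to Lemma~\ref{L3.2} for (iii), Remark~\ref{R3.1} for (v), and the diamagnetic inequality $\|\nabla_s|\omega|\|_2\le\|\nabla_s\omega\|_2$ together with the trivial inclusion $\Sigma_s\subset\widetilde{\Sigma}_s$ for (vi).

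The one place your presentation diverges slightly is (iv). You scale an $\varepsilon$-minimizer and invoke ``each piece of $\mathcal{J}$ depends continuously on the scaling factor''; the paper instead uses the derivative bound from (ii) and the mean value identity $\mathcal{J}(v_n)-\mathcal{J}(u_n)=\int_0^1\langle\mathcal{J}'(tu_n+(1-t)v_n),v_n-u_n\rangle\,dt$ to control $|\mathcal{J}((c/c_n)u_n)-\mathcal{J}(u_n)|$ uniformly. Your route is marginally more direct, but note that for the $\liminf$ direction you still need a \emph{uniform} $\Sigma_s$ bound on the near-minimizers $u_n$ at the varying levels $c_n$ (otherwise ``continuity in the scaling factor'' is not uniform in $n$); this follows from the already-established $\limsup$ inequality together with the coercivity estimate \eqref{E3.6}, whose constants depend continuously on $c$. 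The paper handles this by citing (iii), which strictly speaking is also stated for fixed $c$, so both arguments rely on the same easily-filled point.
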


\begin{proof}
\textbf{(i)} We follow the steps of Proposition 2.3 \cite{haj1} by choosing $g(x,t)=-t^{\sigma}$. For any $u, v\in \Sigma_{s}(\mathbb{R}^{N})$, we can see the last term of functional
\begin{equation*}
-\int_{\mathbb{R}^{N}}|u(x)|^{2\sigma}u(x)v(x)dx
\end{equation*}
is of class $C^{1}$ on $\Sigma_{s}(\mathbb{R}^{N})$. Then by the definition of $\Sigma_{s}(\mathbb{R}^{N})$ (see \eqref{E2.7}), the first two terms of the functional are of class $C^1$ on $\Sigma_{s}(\mathbb{R}^{N})$.

\textbf{(ii)}  From \textbf{(i)}, $\mathcal{J}$ is of class $C^{1}$ on $\Sigma_{s}(\mathbb{R}^{N})$. Moreover, for all $u, v\in \Sigma_{s}(\mathbb{R}^{N})$, we have
\begin{eqnarray*}
 \prec \mathcal{J}'(u),v\succ &=& C_{N,S}\int_{\mathbb{R}^{N}}\int_{\mathbb{R}^{N}}\frac{|u(x)-u(y)||v(x)-v(y)|}{|x-y|^{N+2s}}dxdy+\int_{\mathbb{R}^{N}}|x|^{2}u(x)v(x)dx \\
   && -\int_{\mathbb{R}^{N}}|u(x)|^{2\sigma}u(x)v(x)dx.
\end{eqnarray*}

For the last term, by H\"older's inequality
\begin{equation*}\label{E4.10}
\int_{\mathbb{R}^{N}}|u(x)|^{2\sigma}u(x)v(x)dx\leq \|u\|_{L^{2\sigma+2}(\mathbb{R}^{N})}^{2\sigma+1}\|v\|_{L^{2\sigma+2}(\mathbb{R}^{N})}
\end{equation*}
Therefore, there exists $C>0$ such that
\begin{equation*}\label{E4.11}
\|\mathcal{J}'(u)\|_{\Sigma^{-1}_s(\mathbb{R}^{N})}\leq C\left(\|u\|_{\Sigma_{s}(\mathbb{R}^{N})}+ \|u\|_{\Sigma_{s}(\mathbb{R}^{N})}^{2\sigma+1}\right).
\end{equation*}

\textbf{(iii)} This is a direct result of Lemma \eqref{L3.2}.

\textbf{(iv)} Let $c>0$ and let $\{c_{n}\}\subset (0,\infty)$ such that $c_{n}\rightarrow c$. It suffices to prove that $\mathcal{I}_{c_{n}}\rightarrow \mathcal{I}_{c}$. By the definition of $\mathcal{I}_{c_{n}}$, for any $n$ there exists $u_{n}\in S_{c_{n}}$ such that
\begin{equation}\label{E4.12}
 \mathcal{I}_{c_{n}}\leq \mathcal{J}(u_{n})<\mathcal{I}_{c_{n}}+\frac{1}{n}.
\end{equation}
From \textbf{(iii)}, there exists a constant $C_{1}>0$ such that for all $n$, we have  $$\|u_{n}\|_{\Sigma_{s}(\mathbb{R}^{N})}\leq C_{1},\;\;\forall\;n\in \mathbb{N}.$$
Set $v_{n}=\frac{c}{c_{n}}u_{n}$, then, for all $n\in \mathbb{N}$,  we have
\begin{equation*}\label{E4.13}
  v_{n}\in S_{c}\;\;\mbox{and}\;\;\|u_{n}-v_{n}\|_{\Sigma_{s}(\mathbb{R}^{N})}=\left|1-\frac{c}{c_{n}}\right|\|u_{n}\|_{\Sigma_{s}(\mathbb{R}^{N})}\leq C_{1}\left|1-\frac{c}{c_{n}}\right|,
\end{equation*}
which implies
\begin{equation}\label{E4.14}
\|u_{n}-v_{n}\|_{\Sigma_{s}(\mathbb{R}^{N})}\leq C_{1}+1\;\;\mbox{for} \;n\;\mbox{ large enough }.
\end{equation}

We deduce by part \textbf{(ii)} that there exists a positive constant $K:=K(C_{1})$ such that
\begin{equation}\label{E4.15}
\|\mathcal{J}'(u)\|_{\Sigma^{-1}_s(\mathbb{R}^{N})}\leq K,\;\mbox{for all}\;u\in \Sigma_{s}(\mathbb{R}^{N})\;\mbox{with}\;\|u\|_{\Sigma_{s}(\mathbb{R}^{N})}\leq 2C_1+1.
\end{equation}
From(\ref{E4.14}) and (\ref{E4.15}) we obtain
\begin{eqnarray}\label{E4.16}
  \left|\mathcal{J}(v_{n})-\mathcal{J}(u_{n})\right| &=& \left|\int_{0}^{1}\frac{d}{dt}\mathcal{J}\left(t v_{n}+(1-t)u_{n}\right)dt\right| \nonumber \\
   &\leq & \displaystyle\sup_
   {\|u\|_{\Sigma_{s}(\mathbb{R}^{N})}\leq 2 C_{1}+1}\|\mathcal{J}'(u)\|_{\Sigma^{-1}_s(\mathbb{R}^{N})}\|v_{n}-u_{n}\|_{\Sigma_{s}(\mathbb{R}^{N})} \nonumber\\
   &\leq&  K C_{1}\left|1-\frac{c}{c_{n}}\right|.
\end{eqnarray}
Then, from (\ref{E4.12}) and (\ref{E4.16}), we obtain
\begin{eqnarray*}
   \mathcal{I}_{c_{n}} &\geq& \mathcal{J}(u_{n})-\frac{1}{n} \\
   &\geq& \mathcal{J}(v_{n})- K C_{1}\left|1-\frac{c}{c_{n}}\right|-\frac{1}{n}\\
   &\geq&  \mathcal{I}_{c}- K C_{1}\left|1-\frac{c}{c_{n}}\right|-\frac{1}{n}
\end{eqnarray*}
Combining this with the fact that $\displaystyle\lim_{n\rightarrow \infty}c_{n}=c$, it yields
\begin{equation}\label{E4.17}
  \displaystyle \liminf_{n\rightarrow \infty}\mathcal{I}_{c_{n}}\geq\mathcal{I}_{c}.
\end{equation}

Now, from Lemma \eqref{L3.2} and by the definition of $\mathcal{I}_{c}$, there exists a positive constant $C_{2}$ and  a sequence $\{u_{n}\}\subset \mathcal{S}_{c}$ such that
\begin{equation*}\label{E4.171}
  \|u_{n}\|_{\Sigma_{s}(\mathbb{R}^{N})}\leq C_{2}\;\;\mbox{and}\;\;\displaystyle\lim_{n\rightarrow \infty} \mathcal{J}(u_{n})=\mathcal{I}_{c}.
\end{equation*}
Set $v_{n}=\frac{c_{n}}{c}u_{n}$, then $v_{n}\in S_{c_{n}}$, there exists a constant $L=L(C_{2})$ such that
$$\|v_{n}-u_{n}\|_{\Sigma_{s}(\mathbb{R}^{N})}\leq C_{2}\left|1-\frac{c_{n}}{c}\right|\;\;\mbox{and}\;\;| \mathcal{J}(v_{n})- \mathcal{J}(u_{n})|\leq L\,C_{2}\left|1-\frac{c_{n}}{c}\right|.$$
Combining this with (\ref{E4.12}), we obtain
$$\mathcal{I}_{c_{n}}\leq \mathcal{J}(v_{n})\leq \mathcal{J}(u_{n})+L\,C_{2}\left|1-\frac{c_{n}}{c}\right|.$$
Since $\displaystyle\lim_{n\rightarrow \infty}c_{n}=c$, we have
\begin{equation}\label{E4.18}
  \displaystyle \limsup_{n\rightarrow \infty}\mathcal{I}_{c_{n}}\leq\mathcal{I}_{c}.
\end{equation}
It follows from \eqref{E4.17} and \eqref{E4.18} that
$$\displaystyle \lim_{n\rightarrow \infty}\mathcal{I}_{c_{n}}=\mathcal{I}_{c}.$$

\textbf{(v)} This is a direct result of Remark \ref{R3.1}.

\textbf{(vi)} First, we can see $\Sigma_s(\mathbb{R}^{N})\subset\widetilde{\Sigma}_s(\mathbb{R}^{N})$, and any $\omega\in\Sigma_s(\mathbb{R}^{N})$, we have 
\[
\widetilde{\mathcal{J}}(\omega)=\mathcal{J}(\omega)
\]
which implies 
\begin{equation}\label{I1}
\mathcal{I}_c \geq \widetilde{\mathcal{I}}_c.
\end{equation}
Second, for any $\omega\in\widetilde{\Sigma}_s$, we have
\[
\|\nabla_{s}\omega\|^{2}_{L^2(\mathbb{R}^{N})}\geq\|\nabla_{s}|\omega|\|^{2}_{L^2(\mathbb{R}^{N})},
\]
which implies 
\[\widetilde{\mathcal{J}}(\omega)\geq \widetilde{\mathcal{J}}(|\omega|)=\mathcal{J}(\omega)\geq \mathcal{I}_c,\quad \forall\omega\in\widetilde{\Sigma}_s(\mathbb{R}^{N}),
\]
from which we can easily obtain 
\begin{equation}\label{I2}
\widetilde{\mathcal{I}}_c\geq \mathcal{I}_c.
\end{equation}
Combine \eqref{I1} and \eqref{I2}, we finally have $\widetilde{\mathcal{I}}_c=\mathcal{I}_c.$
\end{proof}

Now, for a fixed $c > 0$, we use the following definition of stability (see \cite{R7})

\begin{definition}\label{D4.1} We say that $\widetilde{\mathcal{O}}_{c}$ is stable if\\
\begin{itemize}
  \item $\widetilde{\mathcal{O}}_{c}$ is not empty.
  \item For all  $\omega_0\in\widetilde{\mathcal{O}}_{c}$ and $\varepsilon>0,$ there exists $\delta>0$ such that for all $\psi_{0}\in \widetilde{\Sigma}_s(\mathbb{R}^{N})$, we have
  $$\|\omega_0-\psi_{0}\|_{\widetilde{\Sigma}_{s}(\mathbb{R}^{N})}<\delta\;\Longrightarrow\;\displaystyle
  \inf_{\omega\in\widetilde{\mathcal{O}}_{c}}\|\omega-\psi\|_{\widetilde{\Sigma}_{s}(\mathbb{R}^{N})}<\varepsilon,$$
  \end{itemize}
  where $\psi$ denotes the solution of (\ref{E1.1}) corresponding to the initial data $\psi_{0}$.
\end{definition}

If $\widetilde{\mathcal{O}}_{c}$ is stable, we say the ground state solutions in $\widetilde{\mathcal{O}}_{c}$ are orbitally stable. The following theorem states the orbital stability of $\widetilde{\mathcal{O}}_c$.

\begin{theorem}\label{T4.1}
 Suppose that $0<\sigma<\frac{2s}{N}$, and (\ref{E1.1}) with initial data $\psi_{0}
 \in\widetilde{\Sigma}_s(\mathbb{R}^{N})$ has the unique solution $\psi(t,x)\in \widetilde{\Sigma}_s(\mathbb{R}^{N})$ with
 \begin{equation}\label{E4.19}
 \|\psi(t,.)\|_{L^2(\mathbb{R}^{N})}=\|\psi_{0}(t,.)\|_{L^2(\mathbb{R}^{N})}\;\;\mbox{and}\;\widetilde{\mathcal{J}}(\psi(t,.))=\widetilde{\mathcal{J}}(\psi_{0}(t,.)),
 \end{equation}
 then $\widetilde{\mathcal{O}}_{c}$ is stable.
 \end{theorem}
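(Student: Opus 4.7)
The plan is to argue by contradiction using the classical Cazenave--Lions scheme, exploiting the compactness of minimizing sequences already established in Lemma \ref{lem3}(v).

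Suppose $\widetilde{\mathcal{O}}_c$ is not stable. Then there exist $\omega_0\in\widetilde{\mathcal{O}}_c$, $\varepsilon_0>0$, a sequence $\{\psi_{0,n}\}\subset\widetilde{\Sigma}_s(\mathbb{R}^N)$ with $\psi_{0,n}\to\omega_0$ in $\widetilde{\Sigma}_s$, and times $t_n\geq 0$ such that the corresponding solutions $\psi_n$ of \eqref{E1.1} satisfy
\[
\inf_{\omega\in\widetilde{\mathcal{O}}_c}\|\psi_n(t_n,\cdot)-\omega\|_{\widetilde{\Sigma}_s(\mathbb{R}^N)}\geq \varepsilon_0.
\]
Set $\phi_n:=\psi_n(t_n,\cdot)$. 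By the conservation laws \eqref{E4.19} together with the continuity of the $L^2$-norm and of $\widetilde{\mathcal{J}}$ along $\psi_{0,n}\to\omega_0$, the sequence $\{\phi_n\}$ satisfies $\|\phi_n\|_{L^2}\to\|\omega_0\|_{L^2}=c$ and $\widetilde{\mathcal{J}}(\phi_n)\to\widetilde{\mathcal{J}}(\omega_0)=\widetilde{\mathcal{I}}_c$.

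The goal is to upgrade $\{\phi_n\}$ to an honest minimizing sequence for $\widetilde{\mathcal{I}}_c$. First, I would apply the lower bound from Lemma \ref{L3.2} in the complex-valued framework to conclude that $\{\phi_n\}$ is bounded in $\widetilde{\Sigma}_s(\mathbb{R}^N)$ (mass is bounded and energy is bounded, so the coercive terms $\|\nabla_s\phi_n\|_{L^2}^2$ and $\||x|\phi_n\|_{L^2}^2$ are controlled). Next, rescale: put $\tilde{\phi}_n:=\tfrac{c}{\|\phi_n\|_{L^2}}\phi_n$, so that $\tilde{\phi}_n\in\widetilde{\mathcal{S}}_c$. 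Using Lemma \ref{lem3}(ii) and the mean-value bound exactly as in the proof of Lemma \ref{lem3}(iv), the difference $|\widetilde{\mathcal{J}}(\tilde{\phi}_n)-\widetilde{\mathcal{J}}(\phi_n)|$ is controlled by $K\,\|\tilde{\phi}_n-\phi_n\|_{\widetilde{\Sigma}_s}\leq K\,C\,|1-c/\|\phi_n\|_{L^2}|$, which tends to $0$. Therefore $\widetilde{\mathcal{J}}(\tilde{\phi}_n)\to\widetilde{\mathcal{I}}_c$, so $\{\tilde{\phi}_n\}$ is a minimizing sequence for $\widetilde{\mathcal{I}}_c$.

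By Lemma \ref{lem3}(v), $\{\tilde{\phi}_n\}$ is relatively compact in $\widetilde{\Sigma}_s(\mathbb{R}^N)$. Extract a subsequence with $\tilde{\phi}_n\to\omega^*$ strongly in $\widetilde{\Sigma}_s$; the limit satisfies $\|\omega^*\|_{L^2}=c$ and $\widetilde{\mathcal{J}}(\omega^*)=\widetilde{\mathcal{I}}_c$ by continuity, hence $\omega^*\in\widetilde{\mathcal{O}}_c$. Finally, since $\|\tilde{\phi}_n-\phi_n\|_{\widetilde{\Sigma}_s}\to 0$, the same subsequence gives $\phi_n\to\omega^*$ in $\widetilde{\Sigma}_s$, contradicting $\inf_{\omega\in\widetilde{\mathcal{O}}_c}\|\phi_n-\omega\|_{\widetilde{\Sigma}_s}\geq \varepsilon_0$.

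The main obstacle is the rescaling step: one needs both that the rescaled sequence is still minimizing (which requires the $C^1$-control on $\widetilde{\mathcal{J}}$ from Lemma \ref{lem3}(ii) together with uniform boundedness in $\widetilde{\Sigma}_s$) and that the rescaling does not destroy convergence in the strong topology, so that the contradiction with $\varepsilon_0$ is reached. Everything else is bookkeeping on top of the compactness result Lemma \ref{lem3}(v), which is the real engine behind the theorem.
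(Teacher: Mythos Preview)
Your argument is correct and follows the standard Cazenave--Lions scheme: rescale onto the constraint sphere, invoke compactness of minimizing sequences, and reach a contradiction. The paper arrives at the same contradiction but by a different route. Rather than rescaling $\phi_n$ onto $\widetilde{\mathcal{S}}_c$ and citing Lemma~\ref{lem3}(v) as a black box, the paper keeps $\omega_n:=\phi_n$ on the approximate spheres $\widetilde{\mathcal{S}}_{c_n}$ with $c_n\to c$, passes to the moduli $|\omega_n|$ (using $\widetilde{\mathcal{J}}(\omega_n)\geq\mathcal{J}(|\omega_n|)$ and the continuity $c\mapsto\mathcal{I}_c$ from Lemma~\ref{lem3}(iv) to show $\mathcal{J}(|\omega_n|)\to\mathcal{I}_c$), obtains strong convergence $|\omega_n|\to\varphi$ in $\Sigma_s$, identifies $\varphi=|\omega|$ for the weak limit $\omega$, and then upgrades to strong convergence of $\omega_n$ via lower semicontinuity and norm matching. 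In effect the paper \emph{reproves} the complex-valued case of Lemma~\ref{lem3}(v) inside the theorem, whereas you simply quote it. Your approach is shorter and conceptually cleaner; the paper's approach is more self-contained and makes explicit how the complex compactness reduces to the real one through the modulus, which is perhaps reassuring given that Lemma~\ref{lem3}(v) is only justified in the text by a one-line reference to Remark~\ref{R3.1} (stated for real-valued minimizers).
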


\begin{proof}
The proof is by contradiction: Suppose that $\widetilde{\mathcal{O}}_{c}$ is not stable, then there exists  $\epsilon_{0}>0,\;\omega_0\in\widetilde{\mathcal{O}}_{c}$ and a sequence $\Phi_{0}^{n}\in \widetilde{\Sigma}_s(\mathbb{R}^{N})$ such that $\|\omega_0-\Phi^{n}_{0}\|_{\widetilde{\Sigma}_{s}(\mathbb{R}^{N})}\rightarrow 0\;\mbox{as}\;n\rightarrow \infty$, but
\begin{equation}\label{E4.20}
\displaystyle
  \inf_{Z\in\widetilde{\mathcal{O}}_{c}}\|\Phi^{n}(t_{n},.)-Z\|_{\widetilde{\Sigma}_{s}(\mathbb{R}^{N})}\geq\varepsilon,
\end{equation}
for some sequence $\{t_{n}\}\subset \mathbb{R}$, where $\Phi^{n}(t,.)$ is the unique solution of problem (\ref{E1.1}) corresponding to the initial condition $\Phi^{n}_{0}$.

Let $\omega_{n}=\Phi^{n}(t_{n},.)=(u_{n},v_{n})\in  \widetilde{\Sigma}_{s}(\mathbb{R}^{N})$. Then, since $\omega\in\widetilde{\mathcal{S}}_{c}$ and $\widetilde{\mathcal{J}}(\omega)=\widetilde{\mathcal{I}}_{c}$, it follows from the continuity of $\|.\|_{2}$ and $\widetilde{\mathcal{J}}$ in $\widetilde{\Sigma}_{s}(\mathbb{R}^{N})$ that

\begin{equation*}\label{E4.21}
\|\Phi^{n}_{0}\|_{2}\rightarrow c\;\mbox{and}\;\widetilde{\mathcal{J}}(\Phi^{n}_{0})\rightarrow\widetilde{\mathcal{I}}_{c},\ n\rightarrow\infty.
\end{equation*}

Thus, we deduce from (\ref{E4.19}) that
\begin{equation}\label{E4.22}
\|\omega_{n}\|_{2}=\|\Phi^{n}_{0}\|_{2}\rightarrow c\;\mbox{and}\;\widetilde{\mathcal{J}}(\omega_{n})=\widetilde{\mathcal{J}}(\Phi^{n}_{0})\rightarrow\widetilde{\mathcal{I}}_{c},\ n\rightarrow\infty.
\end{equation}

Since $\{\omega_{n}\}\subset  \widetilde{\Sigma}_{s}(\mathbb{R}^{N})$, it is easy to see that $\{|\omega_{n}|\}\subset  \Sigma_{s}(\mathbb{R}^{N})$. On the other hand, Lemma \ref{lem3} \textbf{(iii)} and proof of Lemma \ref{L3.2} imply that $\{\omega_{n}\}$ is bounded in $\widetilde{\Sigma}_{s}(\mathbb{R}^{N})$ and hence, by passing to a subsequence there exists $\omega=(u,v)\in\widetilde{\Sigma}_{s}(\mathbb{R}^{N})$ such that
\begin{equation}\label{E4.24}
u_{n}\rightharpoonup u,\;v_{n}\rightharpoonup v\quad\mbox{and}\quad\liminf_{n\rightarrow \infty}\|\nabla_{s}u_{n}\|_{L^2(\mathbb{R}^{N})}+\|\nabla_{s}v_{n}\|_{L^2(\mathbb{R}^{N})}\;\mbox{exists}.
\end{equation}
Now, by a straightforward computation we obtain
\begin{equation}\label{E4.25}
\widetilde{\mathcal{J}}(\omega_{n})-\widetilde{\mathcal{J}}(|\omega_{n}|)=\frac{1}{2}\|\nabla_{s}\omega_{n}\|^{2}_{L^2(\mathbb{R}^{N})}-\frac{1}{2}\|\nabla_{s}|\omega_{n}|\|^{2}_{L^2(\mathbb{R}^{N})}\geq0.
\end{equation}
Thus, we obtain
\begin{equation*}\label{E4.26}
\widetilde{\mathcal{I}}_{c}=\lim_{n\rightarrow\infty}\widetilde{\mathcal{J}}(\omega_{n})\geq \limsup_{n\rightarrow\infty}\mathcal{J}(|\omega_{n}|).
\end{equation*}
Besides, by \eqref{E4.22}, 
\begin{equation*}\label{E4.27}
\|\omega_{n}\|_{L^2(\mathbb{R}^{N})}^{2}=\||\omega_{n}|\|_{L^2(\mathbb{R}^{N})}^{2}=c_{n}^{2}\rightarrow c^{2}.
\end{equation*}
It follows from Lemma \ref{lem3} that we have
\begin{equation*}\label{E4.28}
\liminf_{n\rightarrow\infty} \mathcal{J}(|\omega_{n}|)\geq \liminf_{n\rightarrow\infty} \mathcal{I}_{c_{n}}=\mathcal{I}_{c}. 
\end{equation*}
Hence
\begin{equation}\label{E4.29}
\widetilde{\mathcal{I}}_{c}=\lim_{n\rightarrow \infty}\widetilde{\mathcal{J}}(\omega_{n})=\lim_{n\rightarrow \infty}\mathcal{J}(|\omega_{n}|)=\mathcal{I}_c. 
\end{equation}

It follows from (\ref{E4.24}), (\ref{E4.25}) and (\ref{E4.29}) that
\begin{equation*}\label{E4.281}
\displaystyle lim_{n\rightarrow \infty}\|\nabla_{s}u_{n}\|_{L^2(\mathbb{R}^{N})}^{2}+\|\nabla_{s}v_{n}\|_{L^2(\mathbb{R}^{N})}^{2}-\left|\nabla_{s}(u_{n}^{2}+v_{n}^{2})^{\frac{1}{2}}\right|^{2}=0,
\end{equation*}
which is equivalent to say that
\begin{equation}\label{E4.291}
\displaystyle\lim_{n\rightarrow \infty}\|\nabla_{s}w_{n}\|_{L^2(\mathbb{R}^{N})}^{2}=\displaystyle\lim_{n\rightarrow \infty}\|\nabla_{s}|w_{n}|\|_{L^2(\mathbb{R}^{N})}^{2}.
\end{equation}
The boundedness of $\omega_n$ in $\widetilde{\Sigma_s}(\mathbb{R}^{N})$ and (\ref{E4.291}) imply that $|w_{n}|$ is bounded in $\Sigma_{s}(\mathbb{R}^{N})$. By using a similar argument to Lemma \ref{L3.1}, there exists $\varphi\in \Sigma_{s}(\mathbb{R}^{N})$ such that
\begin{equation}\label{E4.30}
|\omega_n|\rightarrow\varphi\;\mbox{in}\;\Sigma_{s}(\mathbb{R}^{N})\;\mbox{and}\;\|\varphi\|_{L^2(\mathbb{R}^{N})}=c\;\mbox{with}\;\mathcal{J}(\varphi)=\mathcal{I}_{c}.
\end{equation}

Next, let us prove $\varphi=|\omega|=\left(|u|^2+|v|^2\right)^{1/2}$, Using \eqref{E4.24}, it follows that   
\[
u_{n}\longrightarrow u\;\mbox{ and }\;v_{n}\longrightarrow v\;\mbox{ in }\;L^{2}(B(0,R))\;\mbox{for all}\;R>0.
\]
Since $\left|(u_{n}^{2}+v_{n}^{2})^{\frac{1}{2}}-(u^{2}+v^{2})^{\frac{1}{2}}\right|\leq |u_{n}-u|^{2}+|v_{n}-v|^{2}$, then, one has
\[
(u_{n}^{2}+v_{n}^{2})^{\frac{1}{2}}\longrightarrow (u^{2}+v^{2})^{\frac{1}{2}}\;\mbox{in}\;L^{2}(B(0,R)).
\]
But $|\omega_n|=(u_{n}^{2}+v_{n}^{2})^{\frac{1}{2}}\longrightarrow \varphi\;\mbox{in}\;\Sigma_s\subset L^{2}(\mathbb{R}^N).$ Thus, we certainly have
\[
(u^{2}+v^{2})^{\frac{1}{2}}=|\omega|=\varphi.
\] 
This further implies 
\begin{equation}\label{omegaphi}
\|\omega\|_{L^2(\mathbb{R}^{N})}=\|\varphi\|_{L^2(\mathbb{R}^{N})}=c,\quad \|\omega\|_{L^{2\sigma+2}(\mathbb{R}^{N})}=\|\varphi\|_{L^{2\sigma+2}(\mathbb{R}^{N})}.
\end{equation}
and
\begin{equation}\label{B1}
\begin{aligned}
&\frac{1}{2}\int_{\mathbb{R}^{N}}
|x|^2|\omega|^{2}dx-\frac{1}{2\sigma+2}\int_{\mathbb{R}^{N}}|\omega|^{2\sigma+2}dx\\
=&\frac{1}{2}\int_{\mathbb{R}^{N}}
|x|^2|\varphi|^{2}dx-\frac{1}{2\sigma+2}\int_{\mathbb{R}^{N}}|\varphi|^{2\sigma+2}dx\\
=&\lim_{n\rightarrow\infty} \frac{1}{2}\int_{\mathbb{R}^{N}}
|x|^2|\omega_n|^{2}dx-\frac{1}{2\sigma+2}\int_{\mathbb{R}^{N}}|\omega_n|^{2\sigma+2}dx.
\end{aligned}
\end{equation}
Additionally, by the lower semi-continuity, we further have
\begin{equation}\label{B2}
\|\nabla\omega\|^2_{L^2(\mathbb{R}^{N})}\leq \liminf_{n\rightarrow\infty}\|\nabla\omega_n\|^2_{L^2(\mathbb{R}^{N})}.
\end{equation}
\eqref{B1} together with \eqref{B2} and $\omega\in\widetilde{\mathcal{S}}_c$, we finally obtain
\begin{equation*}
\widetilde{\mathcal{I}}_c\leq \widetilde{\mathcal{J}}(\omega)\leq \lim_{n\rightarrow\infty}\widetilde{\mathcal{J}}(\omega_n)=\widetilde{\mathcal{I}}_c,
\end{equation*}
which implies
\begin{equation}\label{Fstep}
\omega\in\widetilde{\mathcal{O}}_c\quad \text{and}\quad \|\nabla\omega\|^2_{L^2(\mathbb{R}^{N})}=\lim_{n\rightarrow\infty}\|\nabla\omega_n\|^2_{L^2(\mathbb{R}^{N})}.
\end{equation}
Therefore, combining \eqref{E4.30}, \eqref{omegaphi} and \eqref{Fstep}, we finally obtain
\begin{equation*}
\omega_n\rightarrow\omega\quad \text{in}\quad \widetilde{\Sigma}_s(\mathbb{R}^N),
\end{equation*}
which contradicts to \eqref{E4.20}.
\end{proof}

\section{Numerical method for Fractional NLS with harmonic potential}
In this section, we consider numerical methods to solve \eqref{E1.1} and introduce the Split-Step Fourier Spectral method.

First, we truncate \eqref{E1.1} into a finite computational domain $[-L, L]^N$ with periodic boundary conditions:
\begin{equation}\label{E5.1}
\left\{\begin{aligned}
  &i \psi_{t}=(-\Delta)^{s}\psi+|\textbf{x}|^{2}\psi-|\psi|^{2\sigma}\psi ,\ \; t>0\\
  &\psi(0,\textbf{x})=\psi_{0}(\textbf{x}),
\end{aligned}
\right.
\end{equation}
for $\textbf{x}\in[-L,L]^N$.

Let $\tau>0$ be the time step, and define the time sequence $t_n = n\tau$ for $n\geq0$ and the mesh size $h = 2L/J$, where $J$ is a
positive even integer. The spatial grid points are
\begin{equation}\label{E5.11}
\left(\textbf{x}^{\textbf{j}}\right)_{n} = -L + \left(\textbf{j}\right)_nh,\ 1\leq n\leq N,
\end{equation}
where $\textbf{j}$ is a $N$-dimension integer vector with each component between 0 and $J$.

Denote $\psi^n_{\textbf{j}}$ as the numerical approximation of the solution $\psi(\textbf{x}^{\textbf{j}},t_n)$.
By the definition of fractional Laplacian in \eqref{E1.2}, we use the Fourier spectral method for spatial discretization. Hence, we assume the
ansatz:
\begin{equation}\label{E5.2}
\psi(\textbf{x}^{\textbf{j}},t)=\sum_{\textbf{k}\in \textbf{K}}\widehat{\psi_{\textbf{k}}}(t)\exp(i\mu^{\textbf{k}}\textbf{x}^{\textbf{j}}),
\end{equation}
where $I=\left\{\textbf{k}\in \mathbb{R}^N|-J/2\leq \textbf{k}_l\leq J/2-1, 1\leq l\leq N\right\}$, $\left(\mu^{\textbf{k}}\right)_k=\textbf{k}_k\pi/L$, $1\leq k\leq N$.

Now, we introduce the Split-step Fourier Spectral method. The main idea of this method is to solve \eqref{E5.1} in two splitting steps from $t=t_n$ to $t=t_{n+1}$ :
\begin{align}
&i \psi_{t}=|\textbf{x}|^{2}\psi-|\psi|^{2\sigma}\psi\label{E5.3},\\
&i \psi_{t}=(-\Delta)^{s}\psi\label{E5.4}.
\end{align}

First, by multiplying $\psi^*$ on both sides of \eqref{E5.3} and subtracting it from its conjugate, we obtain $|\psi(\textbf{x},t)|=|\psi(\textbf{x},t_n)|$ for any $t\in[t_n,t_{n+1})$, therefore, \eqref{E5.3} can be simplified to
\begin{equation}\label{E5.5}
i \psi_{t}=|\textbf{x}|^{2}\psi-|\psi(\textbf{x},t_n)|^{2\sigma}\psi.
\end{equation}

Second, taking Fourier transform on both sides of \eqref{E5.4}, we get
\begin{equation}\label{E5.6}
i\frac{d\hat{\psi_{\textbf{k}}}
(t)}{dt}=|\mu^{\textbf{k}}|^{2s}\hat{\psi_\textbf{k}}(t).
\end{equation}

We use the second order Strang splitting method with \eqref{E5.5} and \eqref{E5.6} as follows:
\begin{align}
&\psi^{n,1}_{\textbf{j}}=\psi^n_{\textbf{j}}\exp(-i(|\textbf{x}^{\textbf{j}}|^2-|\psi^n_{\textbf{j}}|^{2\sigma})\tau/2)\\
&\psi^{n,2}_{\textbf{j}}=\sum_{\textbf{k}\in \textbf{K}}\widehat{\psi^{n,1}}_{\textbf{k}}\exp(-i|\mu^{\textbf{k}}|^{2s}\tau)\exp(i\mu^{\textbf{k}}\textbf{x}^{\textbf{j}})\\
&\psi^{n+1}_{\textbf{j}}=\psi^{n,2}_{\textbf{j}}\exp(-i(|\textbf{x}^{\textbf{j}}|^2-|\psi^{n,2}_{\textbf{j}}|^{2\sigma})\tau/2)
\end{align}
where $\textbf{j}$ comes from \eqref{E5.11} and $n\geq 0$. For $n=0$, initial condition \eqref{E5.1} is discretized as:
\begin{equation}\label{E5.7}
\psi^0_{\textbf{j}}=\psi_0(\textbf{x}^{\textbf{j}})
\end{equation}

This method has spectral-order accuracy in space and second order in time. Similar to \cite{R1}, this method preserves discrete mass corresponding to \eqref{E1.3} defined as
\begin{equation}\label{E5.8}
M^n=\left(h^N\sum_{\textbf{j}}|\psi^n_{\textbf{j}}|^2\right)^{1/2}.
\end{equation}

\section{Numerical method to solve ground state solutions}
To find ground state solutions, we have to solve the following equation corresponding to $u(x)$:
\begin{equation}\label{E6.1}
(-\Delta)^su+|x|^2u-|u|^{2\sigma}u=\lambda u\ \quad x\in\mathbb{R}^N.
\end{equation}

As discussed previously, for $\sigma<\frac{2s}{N}$, we can solve \eqref{E2.4}-\eqref{E2.7} to find  a solution to \eqref{E6.1}. In order to calculate the minimizer of $\mathcal{J}(u)$ in $S_c$, we use normalized gradient flow method (NGF) \cite{R2}. We first apply the steepest gradient decent method to the energy functional $\mathcal{J}(u)$ without constraint. Then we project the solution back onto the sphere $\mathcal{S}_c$ to make sure that the constraint $||u||_{L^2(\mathbb{R}^{N})} = c$ is satisfied.

Thus, for a given sequence of time $0 = t_0 < t_1 < ... < t_n$ with fixed time step $\tau$, we compute the approximated solution $u^{(n)}$ of the partial differential equation
\begin{equation*}\label{E6.10}
\frac{\partial u}{\partial t} = -\frac{\partial E(u)}{\partial u}
\end{equation*}
combined with the projection onto $\mathcal{S}_c$ at each step. Specifically,
\begin{equation*}\label{E6.11}
\left\{\begin{aligned}
&\frac{\partial \widetilde{u}}{\partial t}=-(-\Delta)^s\widetilde{u}-|x|^2\widetilde{u}+|\widetilde{u}|^{2\sigma}\widetilde{u},\quad \quad t_n<t<t_{n+1}\\
&\widetilde{u}(x,t_n)=u^{(n)}(x)\\
&u^{(n+1)}(x)=c\frac{\widetilde{u}(x,t_{n+1})}{||\widetilde{u}(\cdot,t_{n+1})||_{L^2(\mathbb{R}^{N})}}
\end{aligned}
\right..
\end{equation*}
Here, we use semi-implicity time discretization scheme:
\begin{equation*}\label{E6.12}
\left\{\begin{aligned}
&\frac{\widetilde{u}^{(n+1)}-\widetilde{u}^{(n)}}{\Delta t}=-(-\Delta)^s\widetilde{u}^{(n+1)}-|x|^2\widetilde{u}^{(n+1)}+|\widetilde{u}^{(n)}|^{2\sigma}\widetilde{u}^{(n+1)},\quad \quad t_n<t<t_{n+1}\\
&\widetilde{u}(x,t_n)=u^{(n)}(x)\\
&u^{(n+1)}(x)=c\frac{\widetilde{u}^{(n+1)}(x)}{||\widetilde{u}^{(n+1)}(\cdot)||_{L^2(\mathbb{R}^{N})}}
\end{aligned}
\right.
\end{equation*}
with
\begin{equation*}\label{E6.13}
\delta^{s}_{\textbf{x}}u|_{\textbf{j}}=\sum_{\textbf{k}\in \textbf{K}}|\mu^{\textbf{k}}|^{2s}\exp(i\mu^{\textbf{k}}\textbf{x}^{\textbf{j}})
\end{equation*}
to discretize fractional laplacian, where $\textbf{K}$, $\mu^{\textbf{k}}$ are defined in \eqref{E5.2} .

Therefore in each step, we solve :
\begin{equation}\label{E6.14}
\left\{\begin{aligned}
&\frac{\widetilde{u}^{(n+1)}_{\textbf{j}}-u^{(n)}_{\textbf{j}}}{\tau}=-\delta^{s}_{\textbf{x}}\widetilde{u}^{(n+1)}|_{\textbf{j}}-(|\textbf{x}^\textbf{j}|^2-|u^{(n)}_{\textbf{j}}|^{2\sigma})\widetilde{u}^{(n+1)}_{\textbf{j}}\\
&u^{(n+1)}_{\textbf{j}}=c\frac{\widetilde{u}^{(n+1)}_{\textbf{j}}}{M^n}
\end{aligned}
\right.,
\end{equation}
where $M^n$ is defined in \eqref{E5.8}, $\textbf{j}$ comes from \eqref{E5.11} and $n\geq 0$. For $n=0$, we guess a starting function and discretize it as \eqref{E5.7}.

We need to notice that we can only solve \eqref{E2.4} for $\sigma<\frac{2s}{N}$. If $\sigma\geq \frac{2s}{N}$, $\|u\|_{2\sigma+2}$ can not be bounded by $\|\cdot\|_{\Sigma_s}$, which will cause $I_c=-\infty$ in $\mathcal{S}_c$. However, we can use another constrained variational form to find standing waves to \eqref{E1.1} for $\frac{2s}{N}\leq \sigma<\frac{2s}{N-2s}$. 

For $\sigma<\frac{2s}{N-2s}$, we define the following constrained minimization problem:
\begin{equation}\label{E6.15}
  L_{c}=inf\left\{\mathcal{K}(u): u\in T_{c}\right\},
\end{equation}
with
\begin{equation*}\label{E6.16}T_{c}=\left\{u\in \Sigma_{s}(\mathbb{R}^{N}): \|u\|_{2\sigma+2}=c\right\},\end{equation*}
\begin{equation}\label{E6.17}\mathcal{K}(u)=\frac{1}{2}\|\nabla_{s}u\|_{L^{2}}^{2}+\frac{1}{2}\int_{\mathbb{R}^{N}}|x|^{2}|u(x)|^2dx+\frac{\omega}{2}\int_{\mathbb{R}^{N}}|u(x)|^2dx,\end{equation}
where $\Sigma_s(\mathbb{R}^{N})$ is defined as \eqref{E2.7}. 

For $u\in T_c$, we have the estimate
\begin{equation*}
\begin{aligned}
\int_{\mathbb{R}^{N}}|u(x)|^2dx&\leq |R|^{-2}\int_{|x|>R}|x|^2|u|^2dx+\int_{|x|<R}|u|^2dx\\
&\leq  |R|^{-2}\int_{|x|>R}|x|^2|u|^2dx+C_{R,\sigma}\|u\|^2_{2\sigma+2},
\end{aligned}
\end{equation*}
where $C_{R,\delta}$ depends on $R,\delta$. Hence, for $\omega<0$, if we choose $R$ large enough to make $\left(\frac{1}{2}+\frac{\omega}{R^2}\right)>0$, we get
\begin{equation}\label{K}
\mathcal{K}(u)\geq \frac{1}{2}\|\nabla_{s}u\|_{L^{2}}^{2}+\left(\frac{1}{2}+\frac{\omega}{R^2}\right)\int_{\mathbb{R}^{N}}|x|^{2}|u(x)|^2dx+\frac{\omega}{2} C_{R,\sigma}c^2>-\infty
\end{equation}
for $u\in T_c$. Besides, if $\omega>0$, \eqref{K} is greater than $0$. Similar to Lemma \ref{L3.2} and Theorem \ref{T1}, there exists a local minimizer for \eqref{E6.15} with any $\omega$ and $c$. 

Now we see $\lambda$ as the Lagrange multiplier like \eqref{E2.8} but with a different functional
\begin{equation*}
\mathcal{K}^*(u)=\mathcal{K}(u)-\frac{\lambda}{2\sigma+2}\left(\|u\|^{2\sigma+2}_{2\sigma+2}-c^{2\sigma+2}\right),
\end{equation*}
then we can have the critical points $u^*$ and $\lambda^*$ satisfy
\begin{equation}\label{ustar}
(-\Delta)^su^*+|x|^2u^*-\lambda|u^*|^{2\sigma}u^*+\omega u^*=0,\quad x\in\mathbb{R}^N
\end{equation}
by $\frac{\partial\mathcal{K}^*(u)}{\partial u}=0$. If $\omega>0$, by multiplying $\overline{u^*}$ on both sides of \eqref{ustar} and taking the integral, we can see 
\begin{equation}\label{KP1}
\mathcal{K}^*(u^*)=\mathcal{K}(u^*)=\frac{\lambda}{2}c^{2\sigma+2}>0,
\end{equation}
which implies $\lambda>0$. Therefore, when $\omega>0$, we can define $u_{\omega,c}=\lambda^{1/2\sigma}u^*$ and obtain
\begin{equation}\label{KP2}
(-\Delta)^su_{\omega,c}+|x|^2u_{\omega,c}-|u_{\omega,c}|^{2\sigma}u_{\omega,c}+\omega u_{\omega,c}=0,\quad x\in\mathbb{R}^N,
\end{equation}
which means $u_{\omega,c}e^{i\omega t}$ is one standing wave solution to \eqref{E1.1}. We need to mention \eqref{KP1},\eqref{KP2} actually showed that we can find a ground state solution by solving \eqref{E6.17} if $L_c=\mathcal{K}(u^*)>0$. In fact, we have $\mathcal{K}(u^*)>0$ with $\omega<0$ but not very small. This is related with the smallest eigenvalue of $\left(-\Delta\right)^s+|x|^2$ (\cite{R3}). 

Now, for $\frac{2s}{N}\leq\sigma<\frac{2s}{N-2s}$, we use NGF method and semi-implicity time discretization scheme to solve constrained problem \eqref{E6.15}. Similar to \eqref{E6.14}, the scheme is 
\begin{equation}\label{NGF2}
\left\{\begin{aligned}
&\frac{\widetilde{u}^{(n+1)}_{\textbf{j}}-u^{(n)}_{\textbf{j}}}{\tau}=-\delta^{s}_{\textbf{x}}\widetilde{u}^{(n+1)}|_{\textbf{j}}-(|\textbf{x}^\textbf{j}|^2+u^{(n)}_{\textbf{j}})\widetilde{u}^{(n+1)}_{\textbf{j}}\\
&u^{(n+1)}_{\textbf{j}}=c\frac{\widetilde{u}^{(n+1)}_{\textbf{j}}}{M^n_{2\sigma+2}}
\end{aligned}
\right.,
\end{equation}
where $M^n_{2\sigma+2}$ is discrete $L^{2\sigma+2}$ norm
\[
M^n_{2\sigma+2}=\left(h^N\sum_{\textbf{j}}|u^n_{\textbf{j}}|^{2\sigma+2}\right)^{1/(2\sigma+2)}
\]
and $\textbf{j}$ comes from \eqref{E5.11} and $n\geq 0$.  For $n=0$, we guess a starting function and discretize it as \eqref{E5.7}.

\section{Numerical results}
In this section, we show some numerical results, which can help us understand the ground state solution and also illustrate theoretical results. We have mainly investigated: 1. \textit{Ground state solutions with different $s$.} 2. \textit{Ground state solutions with non-symmetric potentials.} 3. \textit{Stability and dynamcis.} 

\subsection{Numerical results of ground state solutions}
First, we solve \eqref{E2.4} numerically by \eqref{E6.14} in one dimension $N=1$ for the case $s=0.8$ and $\sigma=1$ to obtain a ground state solution $u_0(x)$. From figure \ref{F1.1} we see the $u_0(x)$ is radially decreasing as Remark \ref{R3.2}. 

Second, we put $u_0(x)$ into the \eqref{E1.1} as initial condition and investigate time evolution of standing waves (Figure \ref{F1.2}-\ref{F1.4}). As expected, we see $|\psi(x,t)|$ is conserved and the real and imaginary part of solution change periodicly with time $t$.
 \begin{figure}[H]
  \centering
 \subfigure[Ground state solution $u_0(x)$]
    {\label{F1.1}\includegraphics[width=0.4\linewidth,height=3cm]{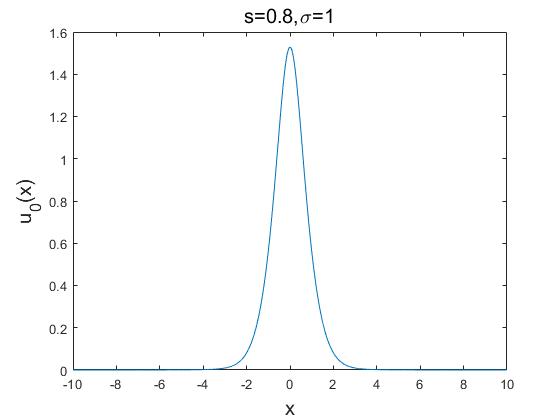}}
  \subfigure[Time evolution of $|\psi(x,t)|$]{
   \label{F1.2} \includegraphics[width=0.4\linewidth,height=3cm]{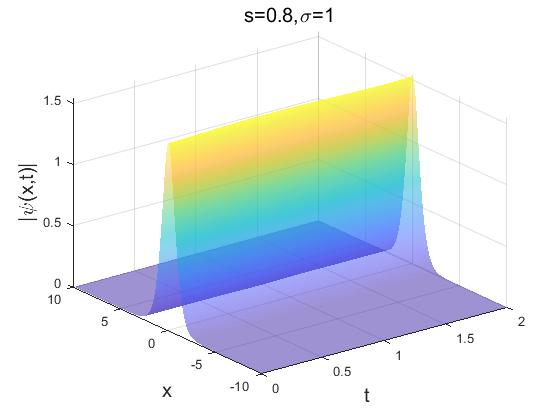}}
\subfigure[Time evolution of $Re(\psi(0,t))$]{
   \label{F1.3} \includegraphics[width=0.4\linewidth,height=3cm]{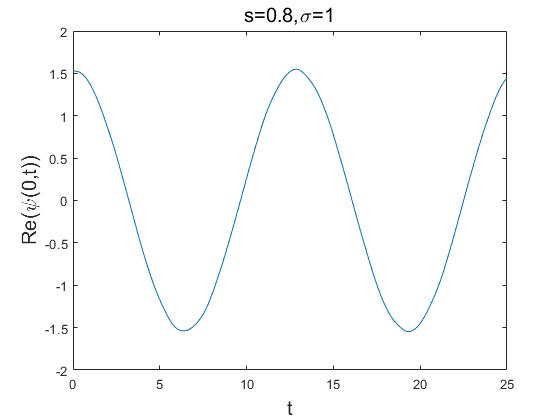}}
\subfigure[Time evolution of $Im(\psi(0,t))$]{
   \label{F1.4} \includegraphics[width=0.4\linewidth,height=3cm]{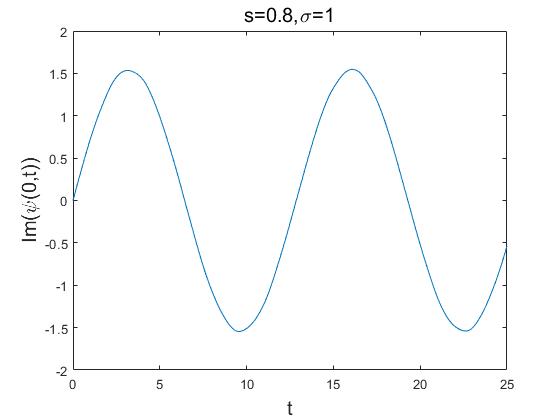}}
   \caption{Ground state solution and time dynamics of standing waves with $s=0.8$, $\sigma=1$, $L=10$ and $J=5000$}
\end{figure}

By Theorem \ref{T1}, we can obtain the existence of ground state solutions with $\sigma<\frac{2s}{N}$. We change $s$ but keep $\sigma=1$ to obtain ground state solutions with different $s$. From figure \ref{F2.1}, we can see when $s$ approaches to $0.5$,  the ground state solution becomes peaked with faster spatial decay. This is a similar result to the case without potential \cite{R4}. We also check $\|\cdot\|_{\Sigma_1(\mathbb{R})}$ of ground state solutions when $s\rightarrow 0.5$ in \eqref{F2.2}, whose growth shows regularity of ground state solution becomes worse.

\begin{figure}[H]
\centering
 \subfigure[Ground state solutions with different $s$]{
\label{F2.1}\includegraphics[width=0.4\linewidth,height=4cm]{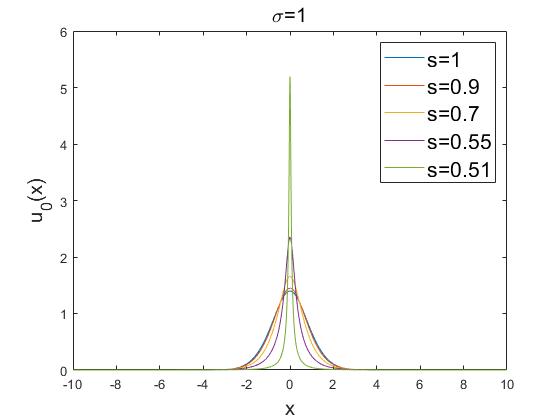}}
 \subfigure[$\|\cdot\|_{\Sigma_1(\mathbb{R})}$ of ground state solutions with different $s$]{
\label{F2.2}\includegraphics[width=0.4\linewidth,height=4Cm]{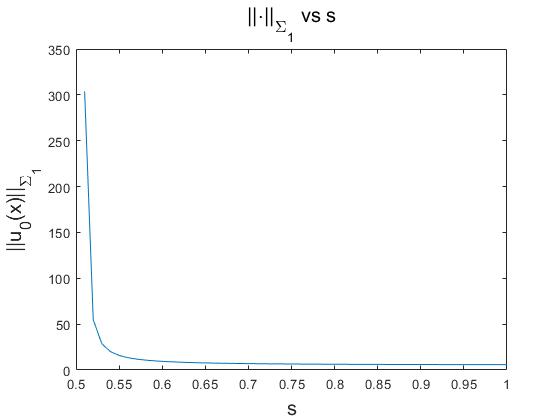}}
\caption{Ground state solutions with $\sigma=1$ and different $s$}
\label{F2}
\end{figure}

From \eqref{F2.1} and \eqref{F2.2}, it seems ground state solutions change continuously with $s$. We use $L^2$ distance between $u^s_0(x)-u^1(x)$ to check and see the convergence of ground state solutions in $L^2$ space with $s\rightarrow 1$. (Figure \ref{F3.1})
\begin{figure}[h!]
\centering
\includegraphics[width=0.7\linewidth,height=5cm]{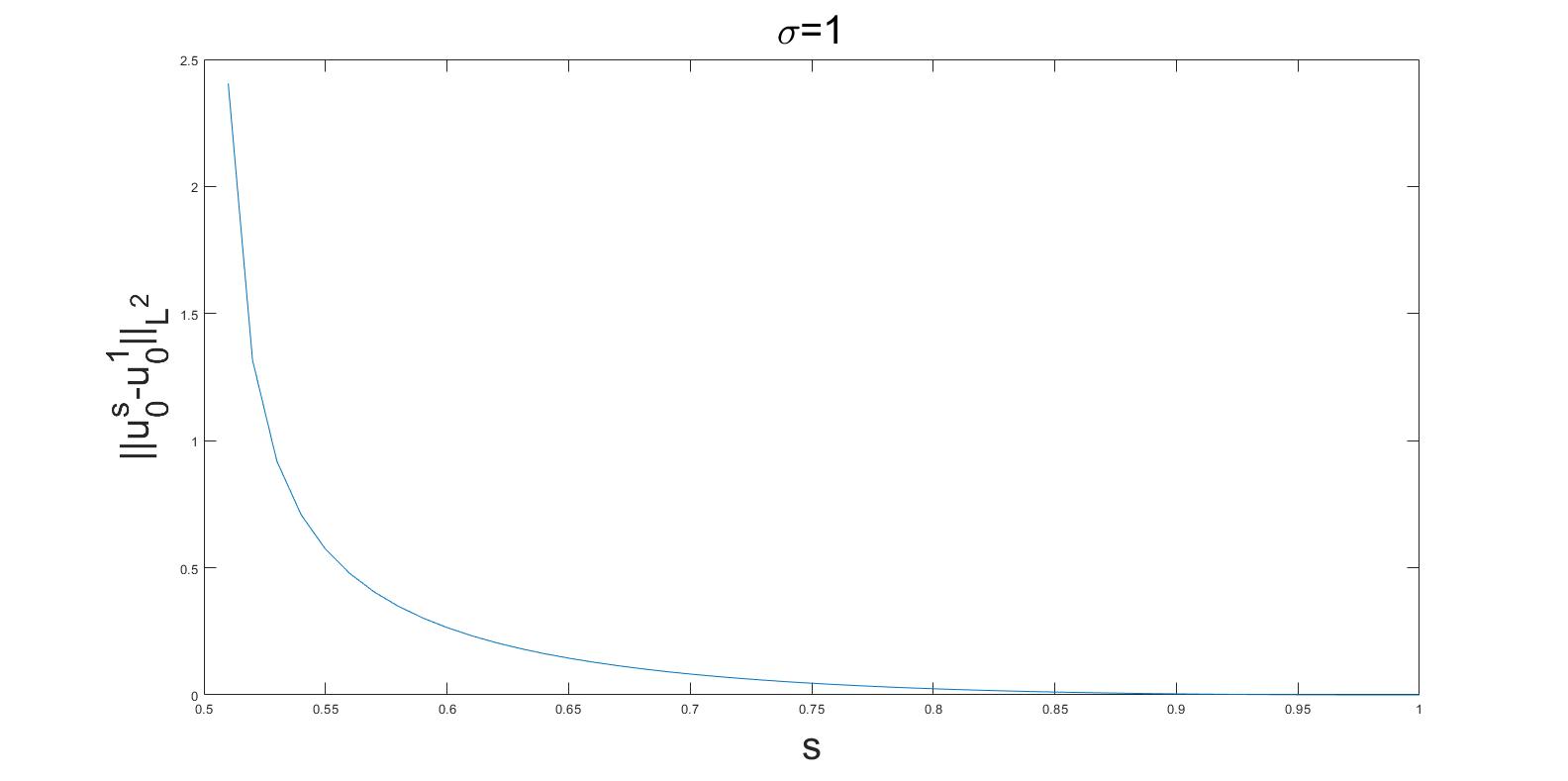}
\caption{$L^2$ distance between ground state solutions of $s<1$ and $s=1$ when $\sigma=1$}
\label{F3.1}
\end{figure}

Then, we test another two things. The first is the relation between the constrained minimal energy in \eqref{E2.4} and $s$. We calculate the discrete energy by
\begin{equation}\label{E7.1}
E(s)=h\Sumb\left[\Suma|\mu_l|^{2s}|\widehat{u}_l|^2+|x_j|^2|u_j|^2-\frac{1}{\sigma+1}|u_j|^{2(\sigma+1)}\right].
\end{equation}
From figure \ref{F11.1}, we find the energy's dependence $(E(s))$ on $s$ is monotonic. When $s$ approaches to $0.5$, the energy will approach to $-\infty$ because of focusing nonlinear term. There are two reasons. First, we keep the $L^2$ norm of $u$ (we test with same mass $c$), but the potential term becomes small since $u$ gathers around $0$. Second, in Lemma \ref{L3.2}, we need $\sigma<\frac{2s}{N}$ to bound $\|u\|_{2\sigma+2}$, whose boundedness becomes worse when $s\rightarrow0.5$. This is different from \cite{R4}, where they didn't use the variation form and keep the $L^2$ norm. 

Second, we test the relationship between mass $c$ and $\lambda_c$, where $c$, $\lambda_c$ are mass and Lagrange multiplier corresponding to the minimizer \eqref{E2.4}-\eqref{E2.7} . By \cite{R3}, in the case $s=1$, there exits $\lambda_0$ such that for any $\sigma$,
\[
\lim_{c\rightarrow 0}\lambda_c=\lambda_0.
\]
We also test this with $s=0.8$. From figure \eqref{F11.2}, we can see for different $\sigma$, when $c\rightarrow 0$, $\lambda_c$ will also converge to a same value $\lambda_0$.
\begin{figure}[H]
\centering
 \subfigure[Evolution of $E(s)$ with different $s$]
 {\includegraphics[width=0.45\linewidth,height=4cm]{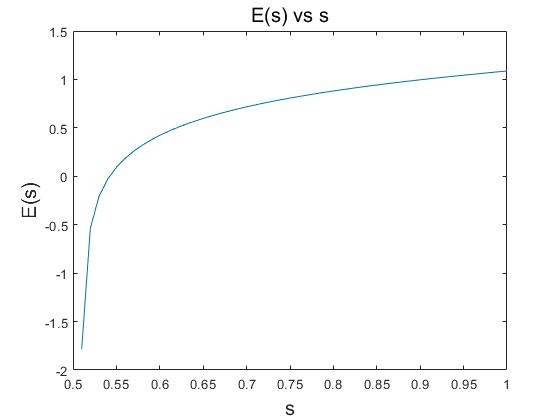}
\label{F11.1}}
\subfigure[Evolution of $\lambda_c$ with different $c$]
{\includegraphics[width=0.45\linewidth,height=4cm]{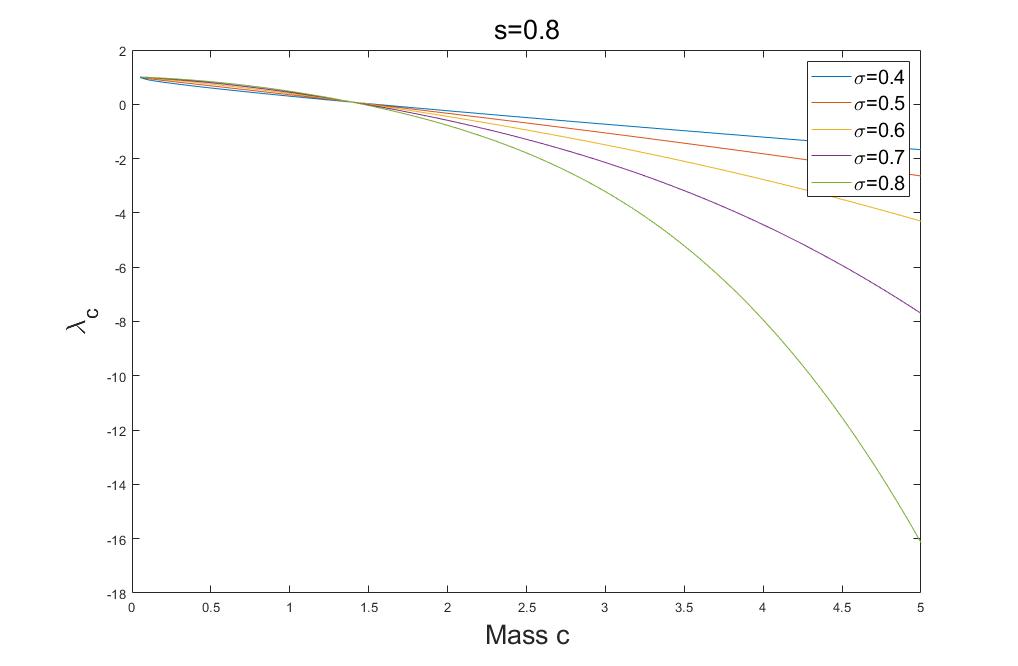}
\label{F11.2}}
\label{F11}
\caption{Energy and $\lambda_c$}
\end{figure}

Up to now, we only consider the case with radial symmetrical potential. However, when potential is not radially symmetric, we can still find standing waves to \eqref{E1.1} using \eqref{E2.4}. We tried the case where potential is $|x|^2+a\sin(2\pi x)$ with $a=1$ and $a=5$. From figure \ref{F10.1} and \ref{F10.2}, we see if we add a nonsymmetrical perturbation to potential, we won't get radially symmetrical ground state solutions.
 \begin{figure}[H]
  \centering
 \subfigure[potential term=$|x|^2+\sin(2\pi x)$]
    {\label{F10.1}\includegraphics[width=0.45\linewidth,height=4cm]{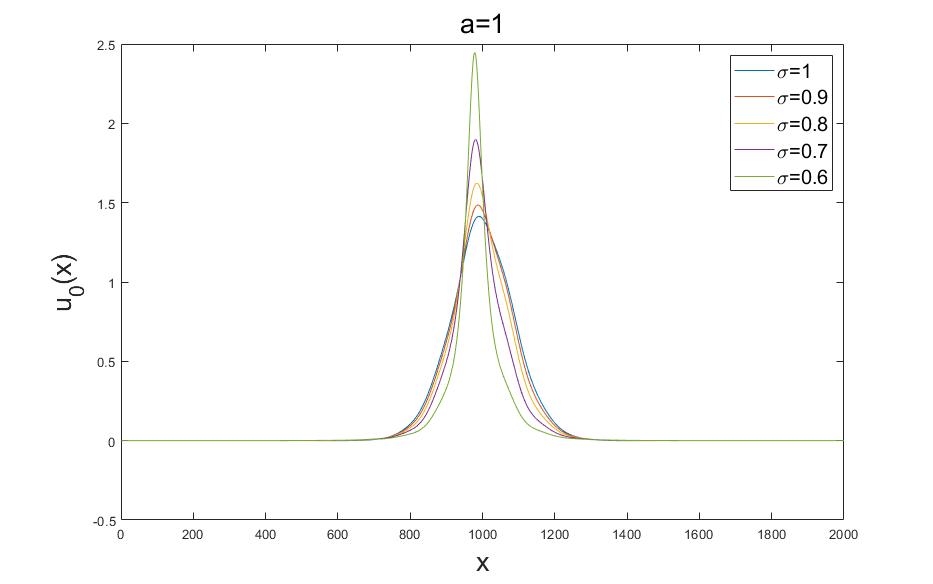}}
  \subfigure[potential term=$|x|^2+5\sin(2\pi x)$]{
   \label{F10.2}\includegraphics[width=0.45\linewidth,height=4cm]{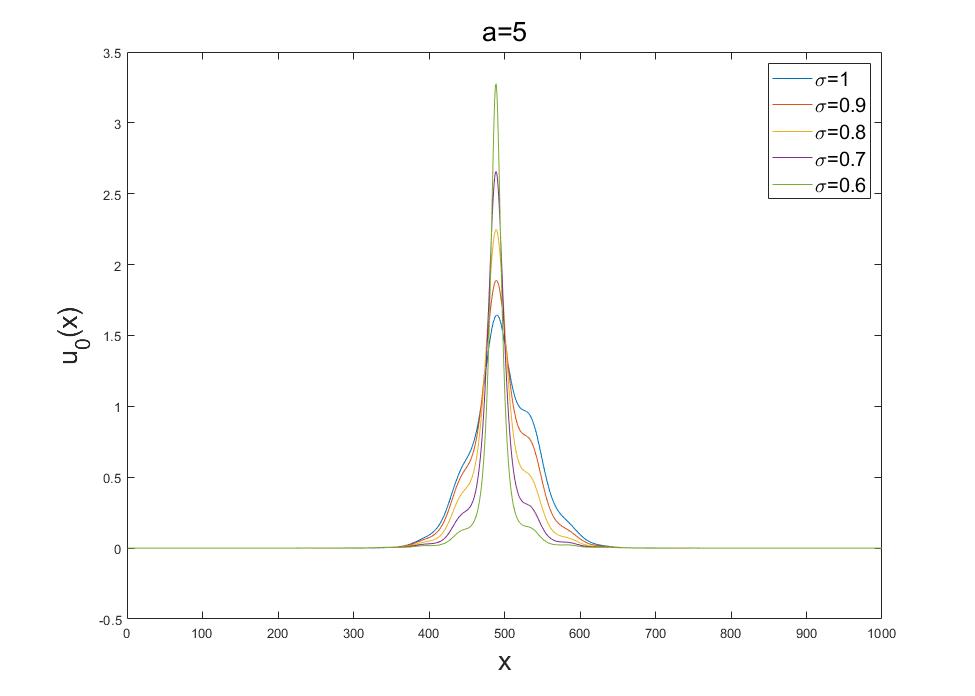}}
   \caption{Ground state solutions with non-symmetric potential}
\end{figure}

\subsection{Dynamics and stability of ground state solution}

First, we consider the case $s>\frac{N\sigma}{2}$, which is covered by our Theorem \ref{T2}. From theoretic results and figure \ref{F1.2}, we can see standing waves preserve $|\psi(x,t)|$ with time $t$. Therefore, we use $|\psi(x,t)|$ to draw graphs and test its stability. We consider the case $s=0.8$ and $\sigma=1$. We first test condition \eqref{E4.19} in Theorem \ref{T2} with initial condition $\psi_0(x)=(1+e)*u_0(x)$, because the scheme is mass preserving, it suffices to test energy preservation, which is showed in figure \ref{Fec1},\ref{Fec2}. Then, we test the stability of solution, where $e$ is a constant number. From figure \ref{F4.1},\ref{F4.2}, we can see when $e=0.05$, the solution almost preserves $|\psi(x,t)|$ as we desired. When $e=0.2$, the solution shows large perturbation but still has periodic behavior, similar to \cite{R3} and \cite{R4}. In figure \ref{FEs}, we compare $\|\cdot\|_{\widetilde{\Sigma}_s}$ distance between ground state solution and perturbed solution.
 \begin{figure}[H]
  \centering
   \subfigure[Energy check when $\psi^s_0(x)=(1.05)*u^s_0(x)$]
    {\label{Fec1}\includegraphics[width=0.45\linewidth,height=4cm]{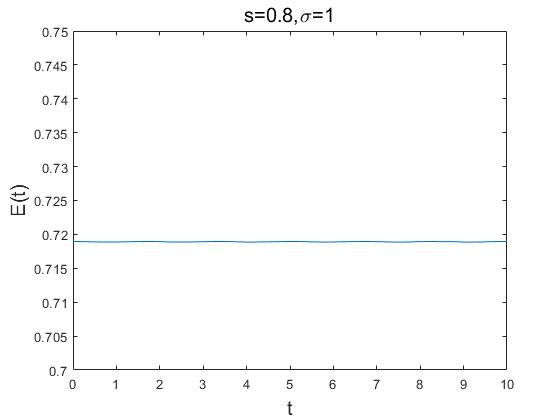}}
  \subfigure[Energy check when $\psi^s_0(x)=(1.2)*u^s_0(x)$]{
   \label{Fec2} \includegraphics[width=0.45\linewidth,height=4cm]{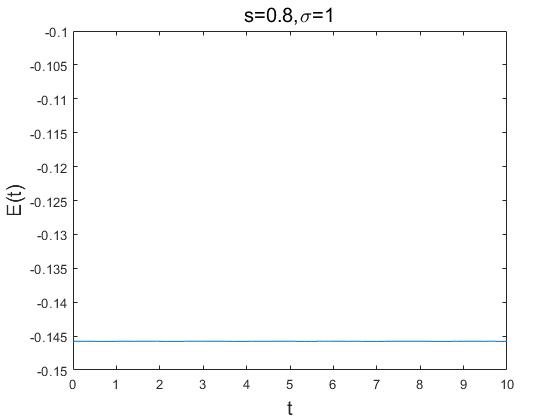}}
 \subfigure[Absolution value of solution when $\psi^s_0(x)=(1.05)*u^s_0(x)$]
    {\label{F4.1}\includegraphics[width=0.45\linewidth,height=4cm]{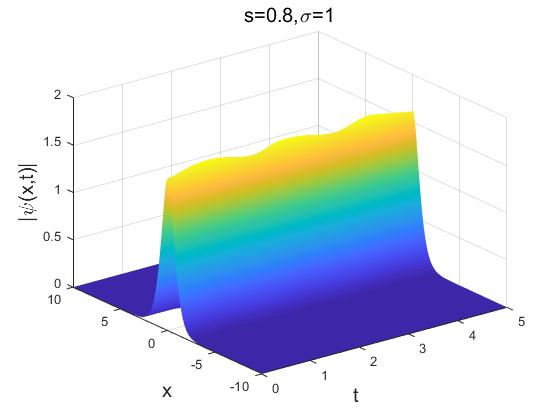}}
  \subfigure[Absolution value of solution when $\psi^s_0(x)=(1.2)*u^s_0(x)$]{
   \label{F4.2} \includegraphics[width=0.45\linewidth,height=4cm]{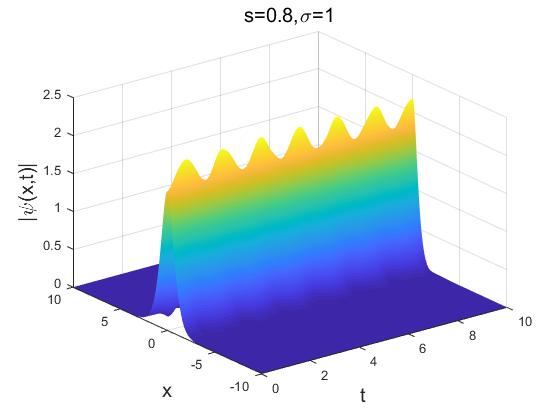}}
   \caption{Energy and stability check with $s=0.8$, $\sigma=1$}
   \label{F4}
\end{figure}
\begin{figure}[h!]
\centering
\includegraphics[width=0.7\linewidth,height=5cm]{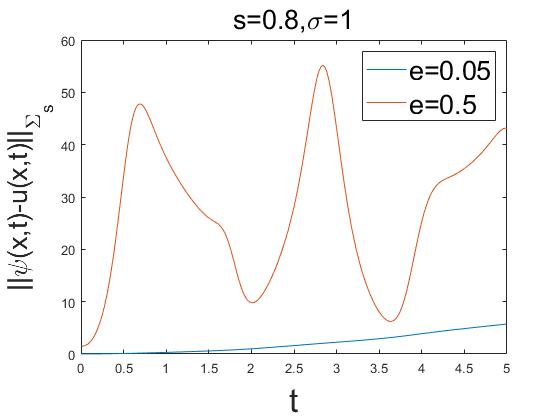}
\caption{$\|\psi^s(x,t)-u^s(x,t)\|_{\widetilde{\Sigma}_s}$ when $s=0.8$ and $\sigma=1$}
\label{FEs}
\end{figure}
We can see from figure \ref{F2.2} and theoretic results that when $s\rightarrow \frac{\sigma N}{2}$, the regularity of ground state solutions becomes worse. This inspires us to investigate its stability relationship with $s$. By Theorem \ref{T2}, Def \ref{D4.1}, the orbital stability means we can find $\omega\in\widetilde{\mathcal{O}}_{c}$ such that $\|\omega-\psi\|_{\widetilde{\Sigma}_{s}(\mathbb{R})}$ is small when we only have small perturbation in initial condition. This definition is hard to measure. Therefore, instead of checking the exact definition of orbital stability, we test classical stability by  comparing distance between perturbed solution and ground state solution using normalized $\|\cdot\|_{\widetilde{\Sigma}_s(\mathbb{R})}$ distance:
\[
D(s,t)=\frac{\|u^s(x,t)-\psi^s(x,t)\|_{\widetilde{\Sigma}_{s}(\mathbb{R})}}{\|u^s(x,t)\|_{\widetilde{\Sigma}_{s}(\mathbb{R})}}.
\]
We test initial condition $\psi_0(x)=0.9*u_0(x)$ with $s=0.8$ and $s=0.6$. From figure \ref{F5},\ref{F6}, as expected, when $s$ is small, its stability seems worse. To be complete, we test $D(s,1)$ with $s$ between $0.51$ and $1$ in figure \ref{F7}. We can see $D(1,s)$ increases when $s$ approaches to $0.5$, which implies worse stability. 

 \begin{figure}[H]
  \centering
 \subfigure[Abosolute value of solution with $\psi^s_0(x)=0.9*u^s_0(x)$ and $s=0.8$]
    {\label{F5.1}\includegraphics[width=0.4\linewidth,height=4cm]{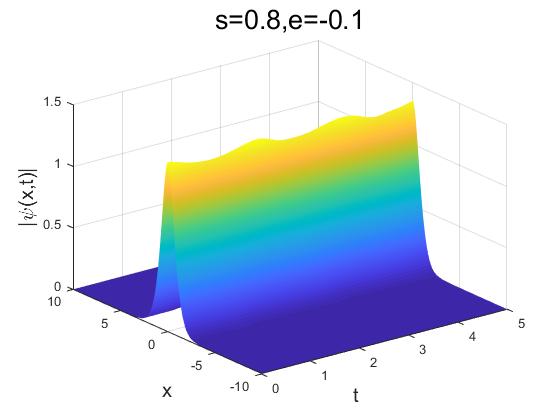}}
  \subfigure[Abosolute value of solution with $\psi^s_0(x)=0.9*u^s_0(x)$ with $s=0.6$]{
   \label{F5.2} \includegraphics[width=0.4\linewidth,height=4cm]{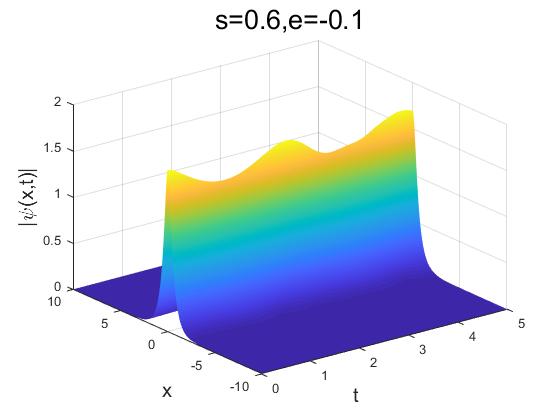}}
   \caption{Abosolute of solution with different $s$}
   \label{F5}
\end{figure}
 \begin{figure}[H]
  \centering
 \subfigure[$D(0.8,t)$ vs $D(0.6,t)$]
    {\label{F6}\includegraphics[width=0.4\linewidth,height=4cm]{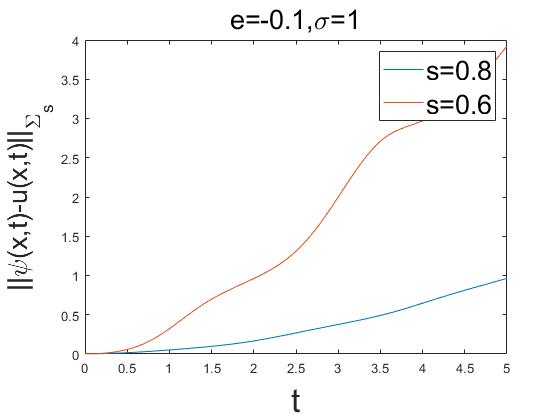}}
  \subfigure[$D(1,t)$ with different $s$]
   {\label{F7} \includegraphics[width=0.4\linewidth,height=4cm]{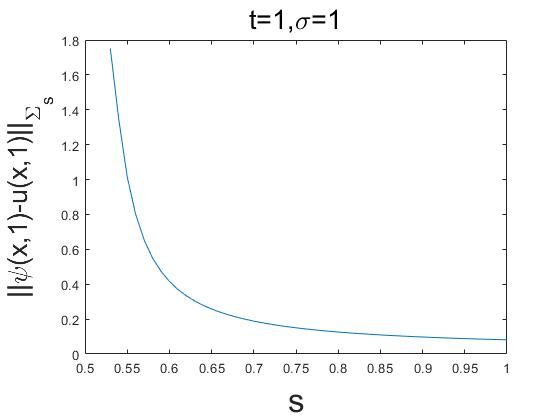}}
   \caption{Stability test with $\psi^s_0(x)=0.9*u^s_0(x)$}
\end{figure}

Second, we try to obtain some numerical result when we touch the critical point $s=\frac{\sigma N}{2}$. In this case, we can't find the ground state solution through \eqref{E2.4} because $I_c=-\infty$. However, as we discussed before, we can find a ground state solution related to another constrained minimization problem \eqref{E6.15}. Here, we try to use the NGF method to find the ground state solution with $s=0.5$, $\sigma=1$. We first tried positive $\omega$, but the projection step dominated the process \eqref{NGF2}. Therefore, we tried $\omega=-0.5$ and find the method does converge to a solution. From figure \ref{F052}-\ref{F056}, we can see $|\psi(x,t)|$ almost preserves with time $t$ with periodical real and imaginary part. We use it to test the finite blow up phenomenon ($\psi_0(x)=2u_0(x)$) appeared in the case without potential \cite{R4}, and this also happens with potential (Figure \ref{F058}). We note here we still can't find a perfect ground state solution, the reason might come from when $s=0.5$, the stability of \eqref{E1.1} is very bad.
\begin{figure}[!h]
\centering
\subfigure[Standing waves when $s=0.5$]{\includegraphics[width=0.4\linewidth,height=3cm]{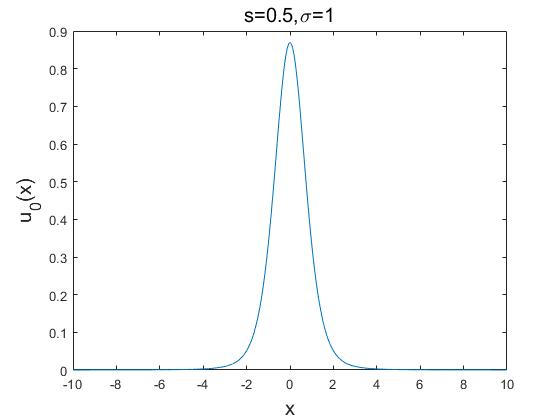}
\label{F051}}
\subfigure[Absolute value of ground state solution w $s=0.5$]{\includegraphics[width=0.4\linewidth,height=3cm]{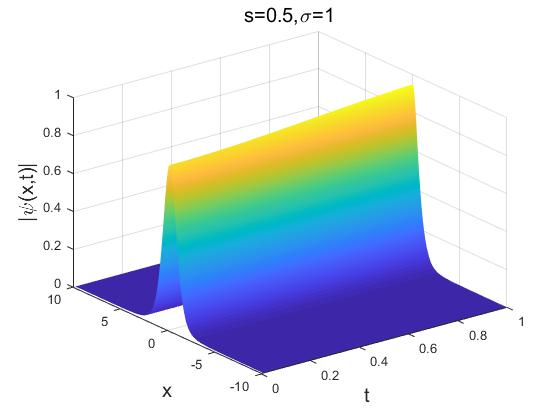}
\label{F052}}
\subfigure[Time evolution of $Re(\psi(0,t))$]{\includegraphics[width=0.4\linewidth,height=3cm]{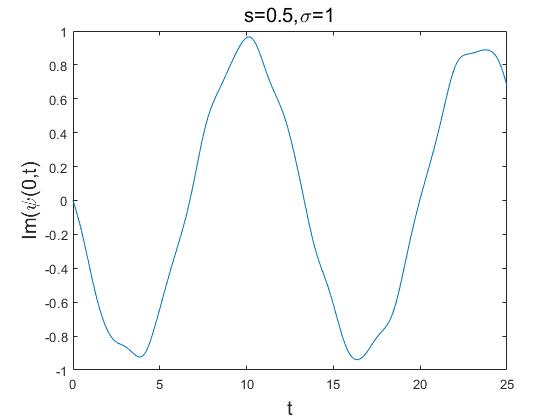}
\label{F055}}
\subfigure[Time evolution of $Im(\psi(0,t))$]{\includegraphics[width=0.4\linewidth,height=3cm]{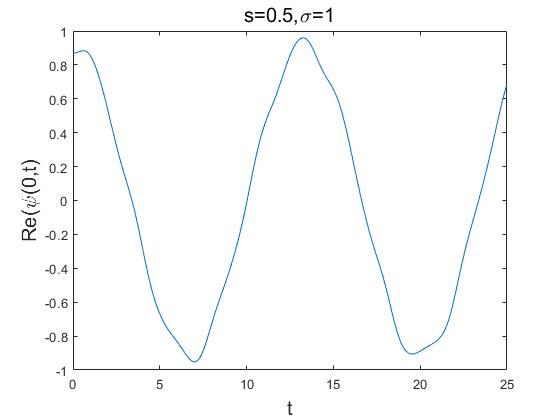}
\label{F056}}
\subfigure[Blow up of solution with $\psi_0(x,t)=2u_0(x)$]{\includegraphics[width=0.4\linewidth,height=3cm]{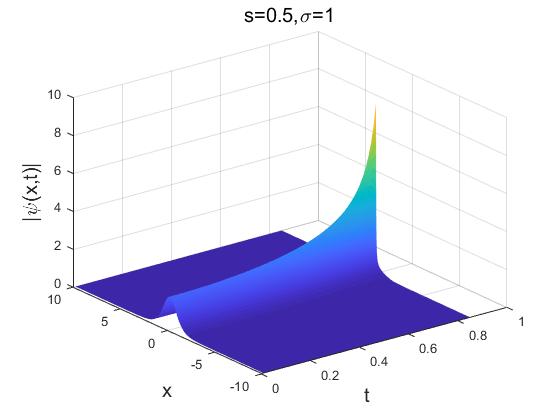}}
\label{F058}
\caption{Standing wave and ground state solution when $s=0.5$, $\delta=1$}
\end{figure}

Finally, we test some simple time dynamics of FNLS, we let $\psi_0(x)=u_0(x)e^{ikx}$, which changes its phase but not absolute value. If $s=0.8$ and $k=1,20$ (Figure \ref{F9.1}, \ref{F9.2}), the maximum point of $|\psi(x,t)|$ will move along $x$ periodically. We also test the $L^\infty$ norm of $\psi(x,t)$ (Figure \ref{F9.3}). We find it decreases first and then approaches to some limits, which is similar to the case without potential \cite{R4}. 

 \begin{figure}[!htbp]
  \centering   
 \subfigure[Abolute value of solution when $s=0.8,\psi_0(x)=u_0(x)e^{ix}$.]
    {\label{F9.1}\includegraphics[width=0.45\linewidth,height=3.3cm]{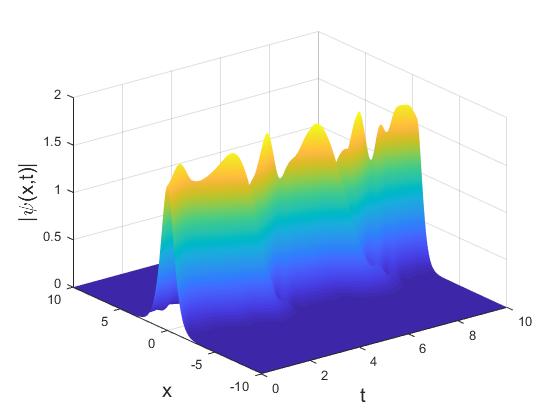}}
  \subfigure[Abolute value of solution when $s=0.8,\psi_0(x)=u_0(x)e^{i20x}$.]{
   \label{F9.2} \includegraphics[width=0.45\linewidth,height=3.3cm]{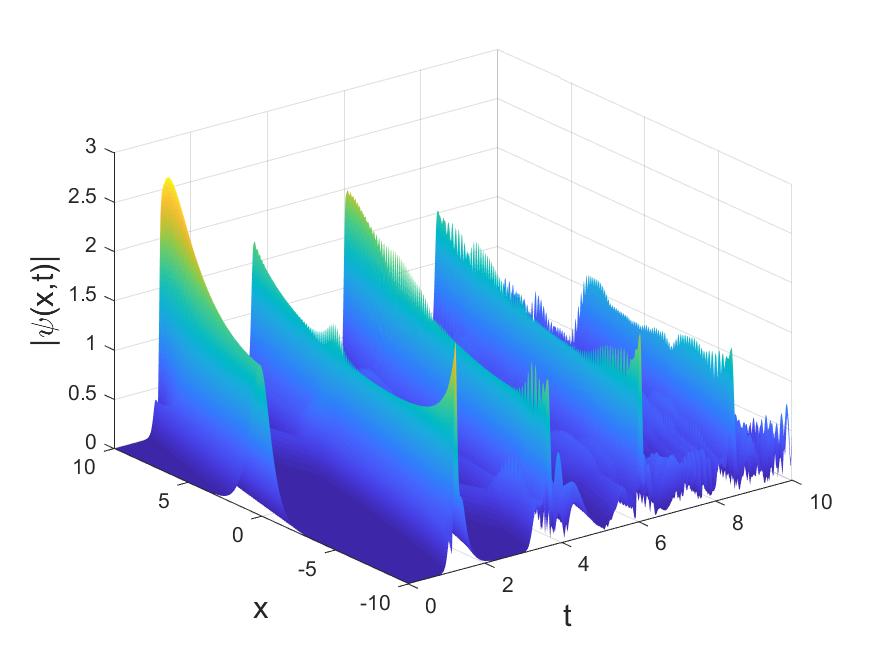}}
   \subfigure[$L^\infty$ evolution of the solution when $s=0.8,\psi_0(x)=u_0(x)e^{i20x}$.]{
   \label{F9.3} \includegraphics[width=0.45\linewidth,height=3.3cm]{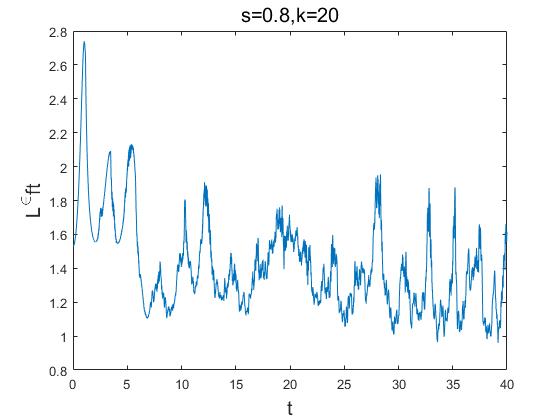}}
   \caption{Dynamics of FNLS}
\end{figure}
\bibliographystyle{siam}
\bibliography{Manuscript}
\end{document}